 \theoremstyle{plain}
 \newtheorem{thm}{Theorem}[section]
 \newtheorem{lem}[thm]{Lemma}
 \theoremstyle{definition}
 \theoremstyle{remark}
 \numberwithin{equation}{section}
\def\Xint#1{\mathchoice
   {\XXint\displaystyle\textstyle{#1}}%
   {\XXint\textstyle\scriptstyle{#1}}%
   {\XXint\scriptstyle\scriptscriptstyle{#1}}%
   {\XXint\scriptscriptstyle\scriptscriptstyle{#1}}%
   \!\int}
\def\XXint#1#2#3{{\setbox0=\hbox{$#1{#2#3}{\int}$}
     \vcenter{\hbox{$#2#3$}}\kern-.5\wd0}}
\def\dashint{\Xint-}
\title[Partial regularity]{Partial regularity  for minimizers of  singular energy functionals, with application to  liquid crystal models}
\author{Lawrence C.\ Evans, Olivier Kneuss and  Hung  Tran}
\address{Evans and Kneuss: Department of Mathematics\\University of California, Berkeley \newline
\indent Tran: Department of Mathematics, University of Chicago}
\thanks{LCE is  supported in part by NSF Grants DMS-1001724 and DMS-1301661, OK is supported by
Swiss NSF Grant 143575, and HT is supported in part by NSF Grant DMS-1001724}
\begin{document}
\maketitle

\noindent {\bf Abstract.}
We study the partial regularity of minimizers for certain singular functionals
in the calculus of variations, motivated by Ball and Majumdar's recent
modification  \cite{BM1}
of the  Landau-de Gennes energy functional.

\begin{section}{Introduction}

\medskip

{\bf 1.1 A singular variational problem.}
In this paper we establish the partial regularity of minimizers $\mathbf{u} \in H^1(U;\Bbb R^k) $ for singular energy functionals having the form
\begin{equation}
\label{energy.functional.1}
I[\mathbf{v}] :=\int_U F(\mathbf v, D\mathbf{v})+  f(\mathbf{v})\, dx
\end{equation}
where $F$ is quasiconvex in the gradient variables and the convex function $f$ blows up to infinity at the boundary of  a given bounded open set $\Bbb K \subset \Bbb R^k$. As we will explain later in Section 5, this sort of energy functional arises in some recently proposed models in 
nematic liquid crystal theory.

\medskip

We assume hereafter that  $U \subset \mathbb R^n$ is bounded smooth domain
 and that $\Bbb K$ is a  bounded, open convex subset of $ \mathbb R^k$. Our assumptions are these:

\medskip

 {\bf (H1) Hypotheses on \boldmath $f$\unboldmath}:
The given function $f: \mathbb R^k \to [0,\infty]$
is nonnegative, convex and  smooth on $\Bbb K \subset \mathbb R^k$. We will  write $f = f(z) $.

We further require that
\begin{equation}
 \label{f.hypotheses}
 \begin{cases}
f(z) <\infty \qquad&\mbox{if }\,z\in  \Bbb K,\\
f(z) =\infty  &\mbox{if }\,z \in \mathbb R^k- \Bbb K
\end{cases}
\end{equation}
and
\begin{equation}
 \label{f.hypotheses.blowup}
f(z)\to \infty  \ \ \mbox{ as } \mbox{dist} (z,\partial \Bbb K) \to 0, z \in \Bbb K.
\end{equation}

\medskip

 {\bf (H2)  Hypotheses on \boldmath $F$\unboldmath }:   We assume $F:\mathbb R^k \times \mathbb M^{k \times n} \to \mathbb R$ is given, $\mathbb M^{k \times n}$ denoting the space of real, $k \times n$ matrices. We write $F=F(z,P)$.

We suppose as well that  $F$ is {\em uniformly strictly quasiconvex} in the $P$ variables. This means that  there exists a constant  $\gamma>0$ such that
\begin{equation}
\label{quasiconvex}
\int_V F(z,P) + \gamma |D\mathbf{w}|^2 \,dx \le \int_V F(z,P+D\mathbf{w}) \,dx,
\end{equation}
 for each   smooth bounded domain $V \subset \mathbb R^n$, each  $z \in \mathbb R^k$ and  $P \in \mathbb M^{k \times n}$, and all $\mathbf{w} \in C^1(V; \mathbb R^k)$ satisfying $\mathbf{w}=0$ on $\partial V$. The  physical significance  of quasiconvexity is discussed for instance
 in the foundational paper \cite{B} of Ball.

\medskip

We introduce the  further technical assumptions  that
\begin{equation}
\begin{cases}
 |D^2_{P} F(z,P)| \le C, \\
 \gamma |P|^2   \le F(z,P) + C,  \\
 |F(z,P)-F(\hat z,P)| \le C (1+|P|^2)|z- \hat z|,
\end{cases}
\end{equation}
for appropriate constants $C, \gamma>0$ and all  $z, \hat z \in \mathbb R^k, P \in  \mathbb M^{k \times n}$.

\medskip
{\bf (H3)  Hypothesis on admissible mappings}: We propose to minimize the functional
$I[\cdot]$ over the admissible class of functions
$$
\mathcal A : = \{ \mathbf{v} \in H^1(U;  \mathbb R^k) \  \mid
 \mathbf{v} =  \mathbf{g} \  \text{ on $\partial U$ in the trace sense} \},
$$
where the given smooth function $ \mathbf{g}: \partial U \to \mathbb R^k$ provides the boundary conditions. For this we need to assume
\begin{equation}
\text{there exists  $ \mathbf{u}^* \in \mathcal A$ with finite energy: $I[\mathbf{u}^*] < \infty$.}
\end{equation}

\medskip
Under the hypotheses (H1)-(H3), standard arguments in the calculus of variations prove the existence of a minimizer $\mathbf{u} \in \mathcal A$:
\begin{equation}
I[\mathbf{u}] = \min_{\mathbf{v} \in \mathcal A}I[\mathbf{v}] < \infty.
\end{equation}
The key question that we address in this paper is the regularity of $\mathbf{u}$.
Since $I[\mathbf{u}] < \infty$, we certainly have $ \mathbf{u} \in \Bbb K$ almost everywhere,
but conceivably $ \mathbf{u}(x)$ lies in $\mathbb R^k - \Bbb K$ for a dense set of points $x \in U$.

\medskip {\bf Remark:} Our hypothesis  that the second derivatives in $P$ of $F$ are bounded is
restrictive for quasiconvex integrands, as most polyconvex $F$ will not satisfy this. Our partial  regularity assertions are in fact valid under more general growth conditions, but to keep this paper at a reasonable length, we omit the proofs: see for instance \cite{Evans1}. \qed

\end{section}

\begin{section}{Partial regularity for a model problem} The  proof of partial regularity is
a fairly straightforward modification of standard, but  rather complicated, variational techniques (cf  \cite{EG1}), with particular
attention paid to the singular term involving the function $f$.

\medskip

To keep the presentation fairly simple, we devote  this section to a   simplified model where $F=F(P)$ depends only on the gradient.
We therefore consider now  the  energy functional
\begin{equation}
\label{sim.form}
I[\mathbf v] = \int_U F(D\mathbf v) +  f(\mathbf v) \,dx,
\end{equation}
and hereafter assume that $\mathbf u \in \mathcal A$ is a minimizer.

\medskip

{\bf 2.1 Linear approximation.}
Given a ball $B(x_0,r) \subset U$, we define the quantity
\begin{equation}
\label{def.E}
E(x_0,r) :=r^{1/2}+\dashint_{B(x_0,r)} |D\mathbf{u}-(D\mathbf{u})_{x_0,r}|^2\,dx,
\end{equation}
 which measures the averaged $L^2$-deviation of $D\mathbf{u}$ over the ball from
its average value
$$
(D\mathbf{u})_{x_0,r} :=\dashint_{B(x_0,r)} D\mathbf{u}\,dx.
$$
We later use also the similar notation
$$
(\mathbf{u})_{x_0,r} :=\dashint_{B(x_0,r)} \mathbf{u}\,dx.
$$
In the above formulas, the slash through the integral sign means the average over the ball
$B(x_0,r)$.

\medskip The following  assertion is the key to $C^1$ partial regularity:

\begin{thm} \label{decay}
For each $L>0$, there exists a constant $C=C(L)$ with the property  that for each $\tau \in (0,\frac{1}{8})$ there exists $\epsilon = \epsilon(L,\tau)>0$ such that
\begin{equation}
|(\mathbf{u})_{x_0,r}|, |f((\mathbf{u})_{x_0,r})|, |(D\mathbf{u})_{x_0,r}| \le L
\end{equation}
and
\begin{equation}
E(x_0,r) \le \epsilon
\end{equation}
imply
\begin{equation}
E(x_0,\tau r) \le C \tau^{1/2} E(x_0,r)
\end{equation}
 for each ball $B(x_0,r) \subset U$.

\end{thm}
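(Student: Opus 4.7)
The plan is an indirect blow-up argument in the style of Giaquinta--Giusti and Evans (cf.\ \cite{EG1, Evans1}), with the singular term $f(\mathbf{v})$ treated as a lower-order perturbation that drops out of the linearization. Fix $L>0$ and argue by contradiction: if the decay fails, there are $\tau\in(0,\tfrac{1}{8})$ and sequences $x_k\in U$, $r_k>0$ with $B(x_k,r_k)\subset U$, such that
\[
|(\mathbf{u})_{x_k,r_k}|,\ f((\mathbf{u})_{x_k,r_k}),\ |(D\mathbf{u})_{x_k,r_k}| \le L,\qquad E_k:=E(x_k,r_k)\to 0,
\]
yet $E(x_k,\tau r_k)>C\tau^{1/2}E_k$, where $C=C(L)$ is to be fixed. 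Write $z_k:=(\mathbf{u})_{x_k,r_k}$, $P_k:=(D\mathbf{u})_{x_k,r_k}$, $\lambda_k:=\sqrt{E_k}$, and rescale
\[
\mathbf{v}_k(y):=\frac{\mathbf{u}(x_k+r_ky)-z_k-r_kP_ky}{r_k\lambda_k},\qquad y\in B_1.
\]
Then $\dashint_{B_1}|D\mathbf{v}_k|^2\,dy\le 1$ and $(\mathbf{v}_k)_{0,1}=(D\mathbf{v}_k)_{0,1}=0$. Along a subsequence, $z_k\to z_0$, $P_k\to P_0$ with $|z_0|,|P_0|\le L$ and $\mathbf{v}_k\weak \mathbf{v}$ weakly in $H^1(B_1)$, strongly in $L^2$. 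Since $r_k^{1/2}\le E_k\to 0$, we have $r_k\to 0$ and the crucial bound $r_k/\lambda_k\le r_k^{3/4}\to 0$. The hypothesis $f(z_k)\le L$ with (H1) gives $\operatorname{dist}(z_0,\partial\mathbb{K})\ge\delta(L)>0$; and the identity $\mathbf{u}(x_k+r_ky)=z_k+r_kP_ky+r_k\lambda_k\mathbf{v}_k(y)$ together with $r_k,\,r_k\lambda_k\to 0$ forces the rescaled values to tend to $z_0$ in $L^2(B_1)$ and (up to subsequence) a.e., so $f$ and $f'$ composed with them are uniformly bounded on $B_1$.

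Second, I would derive the linearized equation for $\mathbf{v}$. Testing the Euler--Lagrange equation for $\mathbf{u}$ with $\varphi(x)=r_k\lambda_k\psi((x-x_k)/r_k)$, $\psi\in C_c^\infty(B_1;\mathbb{R}^k)$, changing variables, and dividing by $\lambda_k$ gives
\[
\int_{B_1}\frac{F_P(P_k+\lambda_k D\mathbf{v}_k)-F_P(P_k)}{\lambda_k}:D\psi\,dy + \frac{r_k}{\lambda_k}\int_{B_1}f'(\mathbf{u}(x_k+r_ky))\cdot\psi\,dy = 0.
\]
The second term drops out because $r_k/\lambda_k\to 0$ and $f'$ is bounded along the sequence; Taylor expansion in the first (using $|D_P^2F|\le C$) passes to the limit and yields the constant-coefficient linear elliptic system $\int_{B_1}F_{PP}(P_0)D\mathbf{v}:D\psi\,dy=0$. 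Uniform strict quasiconvexity makes $F_{PP}(P_0)$ Legendre--Hadamard elliptic with constant $\gamma$, so standard $C^\infty$ estimates for such systems give
\[
\dashint_{B_\tau}|D\mathbf{v}-(D\mathbf{v})_{0,\tau}|^2\,dy \le C_0\tau^2\dashint_{B_{1/2}}|D\mathbf{v}|^2\,dy \le C_0\tau^2,
\]
with $C_0$ depending only on $\gamma$ and the $L^\infty$-bound on $D_P^2F$.

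The main obstacle is to upgrade weak convergence to strong convergence $D\mathbf{v}_k\to D\mathbf{v}$ in $L^2(B_{1/2})$, exactly what is needed to transfer the previous estimate to $\mathbf{v}_k$. As in the classical quasiconvex setting, one obtains this via a Caccioppoli-type inequality: test the minimality of $\mathbf{u}$ against the competitor $\tilde{\mathbf{u}}_k:=\mathbf{u}-r_k\lambda_k\eta(\mathbf{v}_k-\mathbf{v})$, with $\eta$ a cutoff supported in $B(x_k,r_k/2)$, and apply \eqref{quasiconvex} to absorb the unfavorable terms. The new complication is the $\mathbb{K}$-constraint: if $\tilde{\mathbf{u}}_k$ leaves $\mathbb{K}$ on a set of positive measure its $f$-energy is infinite and the comparison is vacuous. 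I would resolve this by first Lipschitz-truncating $\mathbf{v}_k-\mathbf{v}$ at a level $M_k\to\infty$ chosen so that $r_k\lambda_k M_k\le \tfrac{1}{2}\delta(L)$: on the truncation set the competitor differs from $\mathbf{u}$ by at most $\tfrac{1}{2}\delta(L)$ and so remains inside $\mathbb{K}$ (where $\mathbf{u}$ is already safely interior by the $L^2$-convergence above), while the exceptional set has measure tending to zero by Chebyshev and the $H^1$-boundedness of $\mathbf{v}_k-\mathbf{v}$, producing a vanishing error via the higher integrability of $D\mathbf{u}$ (Gehring's lemma) applied to the minimizer. Dividing the resulting comparison by $r_k^n\lambda_k^2$ and passing to the limit yields $\int_{B_{1/2}}|D\mathbf{v}_k-D\mathbf{v}|^2\,dy\to 0$.

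Finally, combining strong convergence with the elliptic estimate on $\mathbf{v}$ gives $\limsup_k \dashint_{B_\tau}|D\mathbf{v}_k-(D\mathbf{v}_k)_{0,\tau}|^2\,dy \le C_0\tau^2$, and unscaling yields
\[
\frac{E(x_k,\tau r_k)}{E_k} = \tau^{1/2}\frac{r_k^{1/2}}{E_k} + \frac{\lambda_k^2}{E_k}\dashint_{B_\tau}|D\mathbf{v}_k-(D\mathbf{v}_k)_{0,\tau}|^2\,dy \le \tau^{1/2} + C_0\tau^2 + o(1).
\]
For $\tau<\tfrac{1}{8}$ this is at most $(1+C_0)\tau^{1/2}$, contradicting the blow-up hypothesis once $C:=C(L)$ is chosen, say, equal to $1+2C_0$.
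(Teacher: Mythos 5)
Your overall strategy --- blow-up by contradiction, weak convergence of the rescaled $\mathbf v_k$, a constant-coefficient elliptic system for the limit, strong convergence on a smaller ball, and the concluding rescaled estimate --- matches the paper's. But there is a genuine gap in the step where you derive the linearized system. You test the Euler--Lagrange equation for $\mathbf u$ and discard the term
\[
\frac{r_k}{\lambda_k}\int_{B_1} Df\big(\mathbf u(x_k+r_k y)\big)\cdot\psi\,dy
\]
by asserting that $Df$ composed with the rescaled values is ``uniformly bounded on $B_1$'' because $\mathbf u(x_k+r_k\cdot)\to z_0$ in $L^2$ and a.e. That inference is not valid: $L^2$ and a.e.\ convergence to an interior point of $\Bbb K$ do not preclude $\mathbf u(x_k+r_k y)$ from lying arbitrarily close to $\partial\Bbb K$ on a set of small (but positive) measure, where $|Df|$ can blow up with no rate you control. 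Worse, nothing in (H1)--(H3) guarantees that $Df(\mathbf u)\in L^1$, so the Euler--Lagrange identity you are testing is itself unjustified. The paper makes a point of this (end of Section 4.1: ``we carefully avoided confronting the possibly very singular term $Df(\mathbf u)$''). The correct replacement, carried out in the paper's Lemma~\ref{ELsys}, is to forgo the EL equation and instead compare energies directly with bounded competitors $\tilde{\mathbf v}_m=\rho\bm\varphi+(1-\rho)\mathbf v_m$, $\bm\varphi\in C^\infty$: since $\bm\varphi$ is bounded, $a_m+r_m A_m z+\lambda_m r_m\bm\varphi$ lies in a fixed compact subset of $\Bbb K$ for large $m$, and convexity of $f$ gives a one-sided Taylor estimate whose error is $O(r_m/\lambda_m+r_m^2)$ --- all without ever evaluating $Df$ at $\mathbf u$.

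On the strong-convergence step, you take a genuinely different route from the paper, and its details need the same kind of care. You propose Lipschitz-truncating $\mathbf v_k-\mathbf v$ and invoking Gehring's higher integrability to control the bad set; the paper instead truncates $\mathbf v$ itself at level $1/r_m$ (so that $a_m+r_m A_m z+\lambda_m r_m\bm\phi_m$ stays in $\Bbb K_{\epsilon_0/2}$ by construction), uses the convexity of $f$ to estimate the $f$-contribution with derivatives of $f$ evaluated only at the safe point $a_m+r_m A_m z$, and chooses a good radius $r$ via a Radon-measure weak limit so that no higher integrability is needed. Your route could conceivably be made to work, but Gehring and Lipschitz truncation are extra machinery; the crucial thing it must not skip is exactly the issue above --- every $f$-term must be estimated at points a fixed distance from $\partial\Bbb K$, never by appealing to boundedness of $Df(\mathbf u)$ along the original sequence. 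As written, the proposal does not supply that, and so the argument has a hole at its central analytic step.
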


\begin{proof} 1.  We  argue by contradiction.
Should the  Theorem be false,
there would exist balls $\{B(x_m,r_m)\}_{m=1}^\infty \subset U$ such that
\begin{equation}
\label{de1}
|(\mathbf{u})_{x_m,r_m}|, |f((\mathbf{u})_{x_m,r_m})|, |(D\mathbf{u})_{x_m,r_m}|  \le L,
\end{equation}
and
\begin{equation}
\label{de1.5}
E(x_m, r_m)=: \lambda_m^2 \to 0,
\end{equation}
but
\begin{equation}
\label{contraction}
E(x_m, \tau r_m) > C \tau^{1/2}  \lambda_m^2,
\end{equation}
for a constant $C$ we will select later.

\medskip

2.  We have from \eqref{def.E} and \eqref{de1.5} that
\begin{equation}
\label{estimate.radius}
r_m^{1/2} \le \lambda_m^2.
\end{equation}
Also
\begin{equation}
\label{de2}
\lambda_m^{-2} \dashint_{B(x_m, r_m)} |D\mathbf{u}-(D\mathbf{u})_{x_m, r_m}|^2\,dx \le 1.
\end{equation}
We combine \eqref{de2} with \eqref{de1}, to discover
$$
\dashint_{B(x_m, r_m)} |D\mathbf{u}|^2\, dx \le C.
$$

\medskip

Put $a_m:=(\mathbf{u})_{x_m,r_m},\ A_m := (D\mathbf{u})_{x_m,r_m}$, and introduce the rescaled functions
$$
\mathbf{v}_m(z)= \frac{ \mathbf{u}(x_m+r_mz)-a_m-r_mA_m z} {\lambda_m r_m}
$$
 for $z \in B:=B(0,1)$.
Then
$$D\mathbf{v}_m(z)= \frac {D\mathbf{u}(x_m+r_mz)-A_m}{\lambda_m},
 $$ and
$$
(\mathbf{v}_m)_B=(D\mathbf{v}_m)_B=0.
$$
Observe also  that
$$
\dashint_B |D\mathbf{v}_m(z)|^2\,dz = \lambda_m^{-2} \dashint_{B(x_m,r_m)} |D\mathbf{u} -(D\mathbf{u})_{x_m,r_m}|^2\,dx  \le 1.
$$
Since $(\mathbf{v}_m)_B=0$, Poincar\'e's inequality then provides the bound
$$
\dashint_B |\mathbf{v}_m|^2 \, dz \le C.
$$

Passing if necessary  to a subsequence and relabelling, we may suppose that
\begin{equation}
\label{de3}
\begin{cases}
\mathbf{v}_m \to \mathbf{v}\,\mbox{ strongly in }\,L^2(B; \Bbb R^k),\\
D\mathbf{v}_m \rightharpoonup D\mathbf{v}\,\mbox{ weakly in }\,L^2(B; \Bbb M^{k \times n}).
\end{cases}
\end{equation}
Also since $|a_m|, |A_m| \le L$,
we may assume also that
$$
a_m \to a, \quad A_m \to A.
$$

\medskip

3. We hereafter  write
$$
\Bbb K_\delta := \{q \in \Bbb K|\,\mbox{dist}(q,\partial \Bbb K) >\delta\}.
$$
for  small $\delta>0$. According to \eqref{de1}, we have
$$
|f(a)|=\lim_{m\to \infty} |f(a_m)| \le L.
$$
It consequently follows from  \eqref{f.hypotheses.blowup} that there exists  $\epsilon_0>0$ such that $a \in \Bbb K_{2\epsilon_0}$;
and hence there exists a sufficiently large index $M$
such that $a_m\in  \Bbb K_{\epsilon_0}$ for $m>M$.

\medskip

Recalling that  $\mathbf{u}$ is a minimizer and  rescaling $B(x_m,r_m)$ to the unit ball $B$, we see that
\begin{multline}
\label{de4}
\dashint_B  F(A_m+\lambda_m D\mathbf{v}_m) + f(a_m+r_mA_m z + \lambda_m r_m \mathbf{v}_m)  \,dz \\  \le \dashint_B F(A_m+\lambda_m D \tilde {\mathbf{v}}_m)  + f(a_m+r_mA_m z + \lambda_m r_m \tilde{ \mathbf{v}}_m) \,dz,
\end{multline}
 provided  $\tilde{\mathbf{v}}_m \in H^1(B; \Bbb R^k)$
 and $\tilde {\mathbf{v}}_m = \mathbf{v}_m$ on $\partial B$.
Then 
 \begin{equation}\label{de5}
 \dashint_B DF(A_m) \cdot D \mathbf{v}_m \,dz = \dashint_B DF(A_m)\cdot D \tilde { \mathbf{v}}_m\, dz.
 \end{equation}
It follows that $ \mathbf{v}_m$ is a minimizer of
$$
I_r^m[ \mathbf{w}]=\int_{B(0,r)} F_m( D \mathbf{w}) +  \frac{1}{\lambda_m^2}f(a_m+r_mA_m z + \lambda_m r_m  \mathbf{w}) \, dz,
$$
subject to its boundary conditions, for  the rescaled energy density
\[
F_m(P) :=\frac{F(A_m+\lambda_m P)-F(A_m)-\lambda_m DF(A_m) : P}{\lambda_m^2}
\]
and $r \in (0,1]$. In other words,
\begin{equation}
\label{variational.principle.m}
I_r^m[ \mathbf{v}_m] \le I_r^m[ \mathbf{w}]
\end{equation}  for any $ \mathbf{w} \in H^1(B(0,r); \Bbb R^k)$ such that $ \mathbf{w}= \mathbf{v}_m$ in $\partial B(0,r)$.

 \medskip

4. To streamline the presentation, we sequester various intricate calculations into the proofs of
two technical lemmas that follow this main proof.

\medskip

According to the following Lemma \ref{ELsys}
the limit $\mathbf{v}$ is a weak solution of the constant coefficient,  uniformly elliptic system
\eqref{elliptic.sys}.
 Standard regularity theory (cf. for instance \cite{Gia1}) implies then that
  $\mathbf{v}$ is smooth.  In particular we have the bound
\[
\max_{B(0, \frac 1 2)} |D^2\mathbf{v}| \le C \dashint_B| D\mathbf{v}|^2 \le C.
\]
Consequently
\[
\dashint_{B(0, \tau)} |D\mathbf{v}-(D\mathbf{v})_{0,\tau}|^2\,dx \le C_1 \tau^2
\]
for some constant $C_1= C_1(L)$.

\medskip

However, rescaling the inequalities
\eqref{contraction} and using \eqref{estimate.radius} gives
\[\dashint_{B(0, \tau)} |D\mathbf{v}_m-(D\mathbf{v}_m)_{0,\tau}|^2\,dx \ge  (C-1) \tau^{1/2}.
\]
But owing to the following Lemma \ref{strong.conv}, we have the strong convergence  $\mathbf{v}_m \to  \mathbf{v}$
in $H^1(B(0, \tau); \Bbb R^k)$. This leads to the desired contradiction, provided we take $C = C_1+2$.
\end{proof}

\medskip The previous proof invoked the following two technical  lemmas.

\begin{lem}
\label{strong.conv}
$D \mathbf{v}_m$ converges strongly to $D \mathbf{v}$ in $L^2_{loc}(B;\Bbb M^{k \times n})$.
\end{lem}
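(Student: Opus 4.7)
The plan is to test the minimality inequality \eqref{variational.principle.m} against a suitable cutoff comparison function and then extract $L^2$ gradient convergence from the uniform strict quasiconvexity of $F$ at the constant matrix $A$. Fix radii $0 < s < t < 1$ and a smooth cutoff $\eta$ with $\eta \equiv 1$ on $B(0,s)$, $\mathrm{supp}\,\eta \subset B(0,t)$, and $|D\eta|\le C/(t-s)$. With $\phi_m := \mathbf{v}_m - \mathbf{v}$, set $\tilde{\mathbf{v}}_m := \mathbf{v}_m - \eta\phi_m$, so that $\tilde{\mathbf{v}}_m = \mathbf{v}$ on $B(0,s)$ and $\tilde{\mathbf{v}}_m = \mathbf{v}_m$ on $\partial B(0,t)$. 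Plugging $\mathbf{w} = \tilde{\mathbf{v}}_m$ into \eqref{variational.principle.m} with $r=t$ and rearranging gives
\[
\int_{B(0,s)} \bigl[F_m(D\mathbf{v}_m) - F_m(D\mathbf{v})\bigr]\,dz \le \int_{B(0,t)\setminus B(0,s)}\bigl[F_m(D\tilde{\mathbf{v}}_m) - F_m(D\mathbf{v}_m)\bigr]\,dz + \mathcal E^f_m,
\]
where $\mathcal E^f_m$ is the difference of the singular $f$-terms. Because $a\in\Bbb K_{2\epsilon_0}$ and $f$ is smooth on $\Bbb K$, the two arguments of $f$ remain in a fixed compact subset of $\Bbb K$ on the set $\{|\mathbf{v}_m|\le M\}$, whose complement has measure $O(M^{-2})$ by Chebyshev; combining a local Lipschitz bound for $f$ with the decay $r_m/\lambda_m \le \lambda_m^3\to 0$ from \eqref{estimate.radius} and $\|\phi_m\|_{L^2}\to 0$ yields $\mathcal E^f_m\to 0$.

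For the left-hand side, a Taylor expansion using $|D^2_P F|\le C$ gives $F_m(P) = \tfrac12\langle D^2_P F(A_m+\theta\lambda_m P)P,P\rangle$, so $F_m\to F_\infty(P):=\tfrac12\langle D^2 F(A)P,P\rangle$ uniformly on bounded sets, and the uniform strict quasiconvexity of $F$ (applied to a test field $\epsilon\phi$ and letting $\epsilon\to 0$) descends to the Gårding-type bound
\[
\int_B \langle D^2 F(A) D\mathbf{w}, D\mathbf{w}\rangle\,dz \ge 2\gamma \int_B |D\mathbf{w}|^2\,dz, \qquad \mathbf{w}\in H^1_0(B;\Bbb R^k).
\]
Writing $F_m(D\mathbf{v}_m) - F_m(D\mathbf{v}) = DF_m(D\mathbf{v}):D\phi_m + \tfrac12\langle D^2_P F(A_m+\xi_m) D\phi_m, D\phi_m\rangle$ on $B(0,s)$, the linear term has vanishing integral in the limit: $DF_m(D\mathbf{v})\to D^2 F(A)D\mathbf{v}$ strongly in $L^2$ by the smoothness of $\mathbf{v}$, while $D\phi_m\weak 0$ weakly, and the weak-strong limit vanishes because $\mathbf{v}$ weakly solves the constant-coefficient system $\mathrm{div}(D^2 F(A)D\mathbf{v})=0$ (the content of the Euler-Lagrange lemma invoked in the preceding proof). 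The quadratic term is bounded below by $\gamma\int_{B(0,s)}|D\phi_m|^2$ up to a vanishing error, by uniform continuity of $D^2 F$ on bounded sets after a De Giorgi-type truncation of $D\phi_m$ at a slowly diverging threshold.

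Finally, the annular contribution on the right is handled by choosing $t\in(s,r)$ via a Fubini argument so that $\int_{\partial B(0,t)}|D\mathbf{v}_m|^2$ remains uniformly bounded; then the terms involving $D\eta\otimes\phi_m$ are small thanks to $\|\phi_m\|_{L^2}\to 0$, and the difference $F_m(D\tilde{\mathbf{v}}_m) - F_m(D\mathbf{v}_m)$ is controlled in the thin annulus by the $L^2$ bound on $D\mathbf{v}_m$ together with the quadratic growth of $F_m$. Combining all three pieces and sending $m\to\infty$ gives $D\phi_m\to 0$ in $L^2(B(0,s))$, and since $s\in(0,1)$ is arbitrary the lemma follows. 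I expect the main obstacle to be the Gårding lower bound in a neighbourhood of the non-constant gradient $D\mathbf{v}$, since quasiconvexity is formulated only at constant matrices; the standard remedy is the truncation-and-comparison argument of Acerbi-Fusco/Evans which reduces the problem to a pointwise Taylor expansion on a set of large measure where $|D\phi_m|$ is uniformly bounded, with the complement absorbed via $|\{|D\phi_m|>M\}|\le M^{-2}\|D\phi_m\|_{L^2}^2$.
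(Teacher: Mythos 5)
Your approach is fundamentally different from the paper's, and it runs into two serious obstructions, one of which you partly anticipate but do not resolve.

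\textbf{The quasiconvexity gap is fatal as stated.} After Taylor-expanding $F_m(D\mathbf{v}_m)-F_m(D\mathbf{v})$ around $D\mathbf{v}$, you need to bound $\int_{B(0,s)}\langle D^2F(A_m+\xi_m)D\phi_m,D\phi_m\rangle\,dz$ from below by $\gamma\int_{B(0,s)}|D\phi_m|^2\,dz$. Even after replacing $D^2F(A_m+\xi_m)$ by $D^2F(A)$ via a De Giorgi truncation, the integrated G\aa rding inequality that quasiconvexity gives you, $\int_B\langle D^2F(A)D\mathbf{w},D\mathbf{w}\rangle\ge 2\gamma\int_B|D\mathbf{w}|^2$, is valid only for $\mathbf{w}\in H^1_0$; but $\phi_m=\mathbf{v}_m-\mathbf{v}$ does not vanish on $\partial B(0,s)$. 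Quasiconvexity is \emph{not} pointwise coercivity of the Hessian --- for $\phi_m$ without boundary conditions the quadratic form can be negative --- so the inequality you need is simply false in this generality. The paper sidesteps this entirely by a different decomposition: writing $\bm\psi_m=\rho(\mathbf{v}_m-\bm\phi_m)\in H^1_0(B(0,r))$ and $I_r^m[\bm\phi_m+\bm\psi_m]-I_r^m[\bm\phi_m]=S_1+S_2+S_3$, where $S_1=\int_{B(0,r)}F_m(D\bm\psi_m)\,dz$ is bounded below directly by quasiconvexity at the \emph{constant} matrix $A_m$ (using $\bm\psi_m\in H^1_0$, so $\int DF(A_m)\cdot D\bm\psi_m=0$ and $\int F(A_m+\lambda_m D\bm\psi_m)-F(A_m)\ge\gamma\lambda_m^2\int|D\bm\psi_m|^2$), while the cross term $S_2$ is killed by the bilinear identity $F_m(P+Q)-F_m(P)-F_m(Q)=P^T(\int_0^1\int_0^1 D^2F_m(sP+tQ)\,ds\,dt)Q$ combined with strong/weak $L^2$ convergence. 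This structure, not a Taylor expansion at a non-constant background, is what makes quasiconvexity usable.

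\textbf{The $f$-term is not controlled without truncating $\mathbf{v}$.} Your comparison function $\tilde{\mathbf{v}}_m=(1-\eta)\mathbf{v}_m+\eta\mathbf{v}$ feeds $a_m+r_mA_mz+\lambda_m r_m\tilde{\mathbf{v}}_m$ into $f$, which is a convex combination of a point known to lie in $\Bbb K$ (finite energy of $\mathbf{v}_m$) and $a_m+r_mA_mz+\lambda_m r_m\mathbf{v}(z)$, which need not lie in $\Bbb K$ on the set where $|\mathbf{v}(z)|>c/(\lambda_mr_m)$. On that set $f=+\infty$, so $\mathcal E^f_m$ is not finite, and a Chebyshev estimate on the measure of the bad set does not help: a set of small measure where the integrand is $+\infty$ still has infinite integral. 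At this stage in the argument you cannot invoke boundedness or smoothness of $\mathbf{v}$ --- those come only \emph{after} Lemma~\ref{strong.conv} is in hand (Lemma~\ref{ELsys} uses Lemma~\ref{strong.conv}, and smoothness of $\mathbf{v}$ uses Lemma~\ref{ELsys}), so your appeals to smoothness of $\mathbf{v}$ and to the Euler--Lagrange system in the linear-term discussion are circular. The paper's $r_m$-level truncation $\bm\phi_m$ of $\mathbf{v}$ is introduced precisely to guarantee $\lambda_m r_m|\bm\phi_m|\le C\lambda_m\to 0$, which keeps every $f$-argument inside $\Bbb K_{\epsilon_0/2}$ where $f$, $Df$, $D^2f$ are uniformly bounded; the Radon-measure bookkeeping $\mu_m\rightharpoonup\mu$, $\mu(\partial B(0,r))=0$ then controls the annular error when $R\to r$ and $s\to r$.
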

\begin{proof}
1. We firstly define a Radon measure $\mu_m$ on $B= B(0,1)$ by
$$
\mu_m(A)=\int_A |D \mathbf{v}_m|^2+|D \mathbf{v}|^2 \,dx,
$$
for any Borel set $A \subseteq B $. Since $\{ \mu_m(B) \}_{m=1}^\infty$ is bounded,  we may assume, passing if necessary to a subsequence,  that there exists a Radon measure $\mu$ on $B$ such that
$$
\mu_m \rightharpoonup \mu\,\mbox{ weakly in the sense of measures}.
$$
We then also have $\mu(B) <\infty$; whence
\begin{equation}
\label{mzero}
\mu(\partial B(0,r))=0
\end{equation}
 for  all but at most countably many $r \in (0,1]$.
 Select any $r \in (0,1)$ such that \eqref{mzero} holds.

 \medskip
 2. For $R \in (r,1)$, let $\xi$ be a smooth cutoff function satisfying
 $$
 \begin{cases}
0\le \xi \le 1;  \xi \equiv 1 \,\mbox{ on }\,B(0,r); \\
  \xi \equiv 0\, \mbox{ on }\,\mathbb R^n - B(0,R); \ \
 |D\xi| \le \frac{C}{R-r}.
 \end{cases}
 $$
 Define $\bm{\phi}_m=(\phi_m^1, \dots,\phi_m^k )$, where
 \begin{equation} \label{phi}
 \phi_m^j(x)=
 \begin{cases}
 \frac{1}{r_m},\quad &\mbox{if }\,v^j(x) \ge \frac{1}{r_m}\\
-  \frac{1}{r_m},\quad &\mbox{if }\,v^j(x) \le -\frac{1}{r_m}\\
  v^j(x),\quad&\mbox{if }\, -\frac{1}{r_m} <v^j(x) < \frac{1}{r_m}
 \end{cases}
 \end{equation}
 Then $$ r_m |\bm{\phi}_m | \le C $$ and so
  $  \lambda_m r_m |\bm {\phi}_m| \le C \lambda_m \to 0$ uniformly.
 Since $a_m\in \Bbb K_{\epsilon_0}$, it follows that for $m$ large enough
 \begin{equation}
 \label{control}
a_m +r_mA_m z,\, a_m +r_mA_m z+\lambda_m r_m \bm{\phi}_m \in K_{\epsilon_0/2}.
 \end{equation}
for all $z \in B$.
 \medskip

 Observe also that
$$
\int_B |\bm\phi_m - \mathbf{v}|^2 \, dz \le \sum_{j=1}^k \int_{ \{ r_m | v^j| >1 \} }| \mathbf{v}|^2 \, dz \to 0
$$
and
$$
\int_B |D\bm\phi_m - D\mathbf{v}|^2 \, dz \le \sum_{j=1}^k \int_{ \{ r_m | v^j| >1 \} }| D\mathbf{v}|^2 \, dz \to 0.
$$
Hence \begin{equation}
 \label{strong.convergence.H.one}
\bm\phi_m\rightarrow \mathbf{v} \text{ in $H^1(B;\mathbb{R}^k)$}.
 \end{equation}
  \medskip
 3.
Put $$\tilde {\mathbf{v}}_m := \xi \bm{\phi}_m + (1-\xi) \mathbf{v}_m.$$ Then
 $
 D\tilde {\mathbf{v}}_m = \xi D\bm{\phi}_m + (1-\xi) D{\mathbf{v}}_m + (\bm{\phi}_m-\mathbf{v}_m)D\xi.
 $

 \medskip

 We now assert that
 \begin{equation}
  \label{lim}
 \limsup_{m\to \infty} (I_r^m[\mathbf{v}_m]-I_r^m[\bm{\phi}_m]) \le 0.
 \end{equation}

To see this, note that $I_R^m[\mathbf{v}_m] \le I_R^m[\tilde {\mathbf{v}}_m]$, according
to \eqref{variational.principle.m}.  Consequently,
 \begin{align*}
&\qquad  0  \ge I_R^m[\mathbf{v}_m] - I_R^m[\tilde {\mathbf{v}}_m]   \\
& \qquad =I_r^m[\mathbf{v}_m]-I_r^m[\bm{\phi}_m]  +\int_{B(0,R) - B(0,r)} F_m(D\mathbf{v}_m)-F_m(D\tilde {\mathbf{v}}_m)\,dz \\
& \qquad   \qquad \qquad \qquad   +\frac{1}{\lambda_m^2} \int_{B(0,R) - B(0,r)} f(a_m+r_mA_m z + \lambda_m r_m \mathbf{v}_m)  \notag  \\
& \qquad   \qquad \qquad \qquad \qquad \qquad \qquad \qquad - f(a_m+r_mA_m z +  \lambda_m r_m \tilde {\mathbf{v}}_m)\,dz;
 \end{align*}
and so
\begin{equation}
\label{two.terms.to.estimate}
 \begin{aligned}
 I_r^m[\mathbf{v}_m]-I_r^m[\bm{\phi}_m] & \le \int_{B(0,R) - B(0,r)} F_m(D\tilde {\mathbf{v}}_m)-F_m(D \mathbf{v}_m)\,dz \\
&    +\frac{1}{\lambda_m^2} \int_{B(0,R) - B(0,r)} f(a_m+r_mA_m z + \lambda_m r_m \tilde {\mathbf{v}}_m) \\
&\qquad \qquad  \qquad \qquad- f(a_m+r_mA_m z + \lambda_m r_m  \mathbf{v}_m) \,dz.
 \end{aligned}
 \end{equation}

 Now
  \begin{align*}
&\int_{B(0,R) - B(0,r)} F_m(D\mathbf{v}_m)\, dz \\
&=\frac{1}{\lambda_m^2} \int_{B(0,R) - B(0,r)} F(A_m+\lambda_m D\mathbf{v}_m) - F(A_m)-\lambda_mDF(A_m) \cdot D\mathbf{v}_m\,dz\\
& = \int_{B(0,R) - B(0,r)} \int_0^1 \int_0^1 s(D\mathbf{v}_m)^T \cdot D^2F(A_m + st \lambda_m D\mathbf{v}_m)  D\mathbf{v}_m \,dt\,ds\,dz. \notag
 \end{align*}
Likewise
  \begin{align*}
&\frac{1}{\lambda_m^2} \int_{B(0,R) - B(0,r)} F(A_m+\lambda_m D\tilde {\mathbf{v}}_m) - F(A_m)-\lambda_mDF(A_m) \cdot D\tilde {\mathbf{v}}_m\,dz\\
=& \int_{B(0,R) - B(0,r)} \int_0^1 \int_0^1 s(D\tilde {\mathbf{v}}_m)^T \cdot D^2F(A_m + st \lambda_m D\tilde {\mathbf{v}}_m)  D\tilde {\mathbf{v}}_m \,dt\,ds\,dz. \notag
 \end{align*}
 Combining the foregoing, we deduce that
 \begin{equation}
 \label{estimate.F_m}
 \begin{aligned}
& \int_{B(0,R) - B(0,r)} F_m(D\tilde {\mathbf{v}}_m)-F_m(D \mathbf{v}_m)\,dz \\
& \qquad \qquad  \le C \int_{B(0,R) - B(0,r)} |D\mathbf{v}_m|^2+|D\bm{\phi}_m|^2 + |D\xi|^2 |\bm{\phi}_m-\mathbf{v}_m|^2\,dz \\
&  \qquad \qquad \le\,C \mu(\overline{B(0,R) - B(0,r)}) + o(1)
 \end{aligned}
 \end{equation}
as $m\to \infty$, where we have used \eqref{de3} and \eqref{strong.convergence.H.one}.

\medskip

We next consider the terms involving $f$, taking particular care since $f$ blows up at $\partial \Bbb K$. The convexity of $f$ and \eqref{control} yield
 \begin{align}
 \label{de9}
 &\frac{1}{\lambda_m^2} [f(a_m+r_mA_mz+\lambda_m r_m\tilde {\mathbf{v}}_m)-f(a_m+r_mA_mz+\lambda_m r_m \mathbf{v}_m)] \notag \\
& \le  \frac{1}{\lambda_m^2} [\xi f(a_m+r_mA_mz+\lambda_m r_m \bm{\phi}_m)+(1-\xi)f(a_m+r_mA_mz+\lambda_m r_m \mathbf{v}_m) \notag \\
& \qquad\qquad\qquad\qquad\qquad-f(a_m+r_mA_mz+\lambda_m r_m \mathbf{v}_m)] \notag\\
 & = \frac{1}{\lambda_m^2} \xi [f(a_m+r_mA_mz+\lambda_m r_m \bm{\phi}_m)-f(a_m+r_mA_mz+\lambda_m r_m \mathbf{v}_m)] \notag\\
& \le  \frac{1}{\lambda_m^2} \xi [f(a_m+r_mA_mz)+\lambda_m r_m Df(a_m+r_mA_mz) \cdot \bm{\phi}_m+C |\lambda_m r_m \bm{\phi}_m|^2-\notag\\
 &\qquad\qquad\qquad\qquad\qquad-f(a_m+r_mA_mz)-\lambda_m r_m Df(a_m+r_mA_mz)\cdot \mathbf{v}_m)] \notag\\
& \le C  \xi ( \frac{r_m}{\lambda_m}|\bm{\phi}_m-\mathbf{v}_m| +  r_m^2 |\bm{\phi}_m|^2),\notag
 \end{align}
  $C$ depending upon $\max_{z \in \overline{\Bbb K_{\epsilon_0/2}}} (|Df(z)|+|D^2f(z)|)$.

 \medskip

 According to \eqref{estimate.radius}, \eqref{de2}, \eqref{de3}, and \eqref{strong.convergence.H.one}, we see that
  $$
   \frac{r_m}{\lambda_m}|\bm{\phi}_m-\mathbf{v}_m| \to 0
 $$ in $L^1(B)$. Furthermore,
$ r_m^2 |\bm{\phi}_m|^2 \le C$ and $r_m^2 |\bm{\phi}_m|^2 \to 0$ almost everywhere.
  Hence the Dominated Convergence Theorem implies
 \begin{align*}
\limsup_{m\to \infty}\frac{1}{\lambda_m^2} \int_{B(0,R) - B(0,r)} &\left[f(a_m+r_mA_m z + \lambda_m r_m \tilde {\mathbf{v}}_m) \right.\\
-& \left.f(a_m+r_mA_m z + \lambda_m r_m  \mathbf{v}_m)\right] \,dz\le 0.
 \end{align*}

 \medskip

We recall the  previous estimates \eqref{two.terms.to.estimate} and  \eqref{estimate.F_m},  to conclude that
 $$
 \limsup_{m\to\infty}( I_r^m[\mathbf{v}_m]-I_r^m[\bm{\phi}_m] )\le  C \mu(\overline{B(0,R) - B(0,r)})
 $$
 Letting $R \to r$ and remembering \eqref{mzero}, we obtain the assertion \eqref{lim}.

 \medskip
4.  Given $0<s<r$, we let $\rho$ be another smooth cutoff function such that
 $$
 \begin{cases}
 0\le \rho \le 1;\   \rho \equiv 1 \,\mbox{ on }\,B_s; \\
    \rho \equiv 0\,\mbox{ on }\,\mathbb R^n - B(0,r), \
 |D\rho| \le \frac{C}{r-s}.
 \end{cases}
 $$
Define $$\bm{\psi}_m=\rho(\mathbf{v}_m-\bm{\phi}_m)$$ and notice that $\bm{\phi}_m+\bm{\psi}_m=\rho \mathbf{v}_m +(1-\rho)\bm{\phi}_m$.

 \medskip

Then
 \begin{align*}
 I_r^m [\mathbf{v}_m] - I_r^m[\bm{\phi}_m]
&  =( I_r^m [\mathbf{v}_m] - I_r^m[\bm{\phi}_m+\bm{\psi}_m]) + ( I_r^m [\bm{\phi}_m+\bm{\psi}_m] - I_r^m[\bm{\phi}_m]) \\
&    =: R_1+R_2.
 \end{align*}
Proceeding as above,
 \begin{equation}
 \label{de11}
 -R_1=I_r^m[\rho \mathbf{v}_m+(1-\rho)\bm{\phi}_m]- I_r^m [\mathbf{v}_m] \le C \mu(\overline{B(0,r) - B(0,s)}) +o(1),
 \end{equation}
as $m\to \infty$.  The term $R_2$ can be written as
 \begin{align}
 R_2=& \int_{B(0,r)} F_m(D\bm{\psi}_m) \,dz+ \int_{B(0,r)} F_m(D\bm{\phi}_m + D\bm{\psi}_m)-F_m(D\bm{\phi}_m)-F_m(D\bm{\psi}_m) \,dz \notag\\
 &+\frac{1}{\lambda_m^2}
 \int_{B(0,r)} f(a_m+r_mA_m z + \lambda_m r_m(\bm \phi_m+\bm\psi_m)) \\
 & \qquad  \qquad  \qquad  \qquad  \qquad  \qquad \qquad  \qquad    -f(a_m+r_mA_m z + \lambda_m r_m\bm\phi_m)\,dz \notag\\
 =: &\,S_1+S_2+S_3.\notag
 \end{align}

 \medskip

Now the convexity of $f$ implies
\begin{align*}
 -S_3  & = \frac{1}{\lambda_m^2}
 \int_{B(0,r)} f(a_m+r_mA_m z + \lambda_m r_m\bm\phi_m)- f(a_m+r_mA_m z + \lambda_m r_m(\bm \phi_m+\bm\psi_m))\,dz \\
 &  \le \frac{1}{\lambda_m^2}\int_{B(0,r)}  Df(a_m+r_mA_m z + \lambda_m r_m\bm\phi_m) \cdot
 (- \lambda_m r_m \bm\psi_m )\,dz  \\
& \le \frac{Cr_m}{\lambda_m}\int_{B(0,r)}  |\bm\psi_m |\,dz
 \le \frac{Cr_m}{\lambda_m}\int_{B(0,r)}  |\mathbf{v}_m| +|\mathbf{v}|\,dz  \\
& \le \frac{Cr_m}{\lambda_m} \le C\lambda_m^3
  = o(1)
 \end{align*}
as $m\to \infty$, according to \eqref{estimate.radius} and \eqref{control}.

\medskip

The uniform strict quasiconvexity of $F$ yields
 \begin{align*}
 S_1&=\int_{B(0,r)} F_m(D\bm\psi_m)\,dz \\
 & = \frac{1}{\lambda_m^2} \int_{B(0,r)} F(A_m+\lambda_m D\bm\psi_m)-F(A_m)-\lambda_m DF(A_m)\cdot D\bm\psi_m\,dz\\
 &= \frac{1}{\lambda_m^2} \int_{B(0,r)} F(A_m+\lambda_m D\bm\psi_m)-F(A_m)\,dz \ge \gamma \int_{B(0,r)} |D\bm\psi_m|^2\,dz.\notag
 \end{align*}
Since
\begin{align}
F_m(P+Q)-F_m(P)-F_m(Q)&=(\int_0^1(DF_m(P+tQ)-DF_m(tQ))\,dt)\cdot Q\notag\\
&=P^T  (\int_0^1 \int_0^1 D^2F_m(sP+tQ)\,ds\,dt) Q,\notag
\end{align}
we obtain
$$
 S_2=\int_{B(0,r)} (D\bm\phi_m)^T  G_m  D\bm\psi_m\,dz,
$$
for
\begin{multline*}
G_m=\int_0^1 \int_0^1 D^2F_m(sD\bm\phi_m+tD\bm\psi_m)\,ds\,dt \\
=\int_0^1 \int_0^1 D^2F(A_m+s\lambda_mD\bm\phi_m+t\lambda_mD\bm\psi_m)\,ds\,dt.
\end{multline*}
Since $\lambda_mD\bm\phi_m, \lambda_mD\bm\psi_m\rightarrow \mathbf{0}$ strongly in $L^1$ we deduce that (up to a subsequence)
$\lambda_mD\bm\phi_m(x), \lambda_mD\bm\psi_m(x)\rightarrow \mathbf{0}$ a.e. Hence, recalling that $D\bm\phi_m \to D\mathbf{v}$ strongly in $L^2$ and that $D^2F$ is bounded and continuous, we find that, using the Dominated Convergence Theorem,
$$(D\bm\phi_m)^T  G_m\rightarrow (D\mathbf{v})^T  F(A)\quad \text{ strongly in }L^2.$$
As $D\bm\psi_m \rightharpoonup \mathbf{0}$
weakly in $L^2$, we therefore get that
$$
S_2=o(1),\quad\mbox{as }\,m\to \infty.
$$

\medskip

Combining the foregoing estimates on $R_1, S_1, S_2, S_3$, we eventually find that
$$
\limsup_{m \to \infty} \int_{B(0,r)} |D\bm\psi_m|^2 \,dz\le C\mu(\overline{B(0,r)- B(0,s)}).
$$
Hence for any $0<s<r$,
\begin{equation}
\label{estimate.with.phi.m}
\limsup_{m \to \infty} \int_{B(0,s)} |D\mathbf{v}_m-D\bm\phi_m|^2 \,dz\le C\mu(\overline{B(0,r)- B(0,s)}).
\end{equation}

\medskip

Hence \eqref{estimate.with.phi.m} and  \eqref{strong.convergence.H.one} imply
$$
\limsup_{m \to \infty} \int_{B(0,s)} |D\mathbf{v}_m-D\mathbf{v}|^2 \,dz\le C\mu(\overline{B(0,r)- B(0,s)}).
$$
Our sending $s\to r$ completes the proof.
\end{proof}

\medskip
We need one further assertion,  that $\mathbf{v}$ solves a linear elliptic system (and consequently is smooth.) We again have to take care, as $f$ is singular:

\begin{lem}
\label{ELsys}
The function $\mathbf{v}$ satisfies the integral identity
\begin{equation}
\label{sys}
\int_{B(0,r)} (D\mathbf{w})^T D^2F(A) D\mathbf{v}\,dz=0
\end{equation}
for all  $\mathbf{w} \in H_0^1(B(0,r); \Bbb R^k)$.

\medskip

Consequently, $\mathbf{v}$ is a weak solution of the constant coefficient elliptic system
\begin{equation}
\label{elliptic.sys}
\text{\rm div} \left( D^2F(A) D\mathbf{v} \right)=0.
\end{equation}
\end{lem}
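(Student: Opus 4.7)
The plan is to derive an Euler--Lagrange identity for each rescaled minimizer $\mathbf{v}_m$ and then pass to the limit $m\to\infty$ using the strong convergence from Lemma~\ref{strong.conv}. Fix a test function $\mathbf{w}\in C_c^\infty(B(0,r);\Bbb R^k)$; density will extend the conclusion to all of $H_0^1$. Since $\mathbf{v}_m$ minimizes $I_r^m$ against competitors sharing its boundary values by \eqref{variational.principle.m}, the perturbation $\mathbf{v}_m + s\mathbf{w}$ is admissible for all small $s$, and the vanishing first variation should give
\begin{equation*}
\int_{B(0,r)} DF_m(D\mathbf{v}_m):D\mathbf{w}\, dz + \frac{r_m}{\lambda_m}\int_{B(0,r)} Df\bigl(a_m+r_mA_mz+\lambda_mr_m\mathbf{v}_m\bigr)\cdot\mathbf{w}\, dz = 0.
\end{equation*}
The first term is routine because $F_m$ is smooth with uniformly bounded second derivative. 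The second is delicate because $Df$ blows up near $\partial\Bbb K$; its formal derivation is justified by convexity of $f$ together with the observation that $a_m+r_mA_mz+\lambda_mr_m\mathbf{v}_m(z)= \mathbf{u}(x_m+r_mz)\in \Bbb K$ almost everywhere, and that the infinitesimal perturbation has amplitude $O(\lambda_m r_m)$, extremely small by \eqref{estimate.radius}.

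The next step is to pass to the limit in the first integral. Using the representation
\begin{equation*}
DF_m(D\mathbf{v}_m) = \int_0^1 D^2F(A_m + t\lambda_m D\mathbf{v}_m)\, D\mathbf{v}_m\, dt,
\end{equation*}
together with the continuity and boundedness of $D^2F$ from (H2), the convergence $A_m\to A$, and $\lambda_m D\mathbf{v}_m \to \mathbf{0}$ in $L^2(B)$ (hence a.e.\ along a subsequence), the dominated convergence theorem yields that the kernel converges to $D^2F(A)$ boundedly and pointwise a.e. Combined with the strong convergence $D\mathbf{v}_m\to D\mathbf{v}$ in $L^2_{loc}(B;\Bbb M^{k\times n})$ furnished by Lemma~\ref{strong.conv}, this gives
\begin{equation*}
\int_{B(0,r)} DF_m(D\mathbf{v}_m):D\mathbf{w}\, dz \longrightarrow \int_{B(0,r)} (D\mathbf{w})^T D^2F(A) D\mathbf{v}\, dz.
\end{equation*}

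It remains to show the singular $f$-term vanishes. Its prefactor satisfies $r_m/\lambda_m \le \lambda_m^3 \to 0$ by \eqref{estimate.radius}, so one only needs to ensure that $\int Df(\cdot)\cdot\mathbf{w}\, dz$ does not grow faster than $o(\lambda_m^{-3})$. This can be accomplished by truncating $\mathbf{v}_m$ in the spirit of \eqref{phi} so that the argument of $Df$ is forced into $\Bbb K_{\epsilon_0/2}$, where $|Df|$ is bounded by (H1); the truncation error itself vanishes thanks to the same smallness of $r_m/\lambda_m$. Once \eqref{sys} is established for $\mathbf{w}\in C_c^\infty$, density of $C_c^\infty$ in $H_0^1(B(0,r);\Bbb R^k)$ combined with the boundedness of $D^2F(A)$ extends the identity to all $\mathbf{w}\in H_0^1$, and \eqref{elliptic.sys} follows by interpreting \eqref{sys} in the distributional sense. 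The principal obstacle throughout is the singularity of $f$ at $\partial\Bbb K$, which obstructs direct use of the Euler--Lagrange machinery; the remedies are the exceptional decay $r_m/\lambda_m = O(\lambda_m^3)$ from \eqref{estimate.radius} and the truncation/convexity strategy already deployed in the proof of Lemma~\ref{strong.conv}.
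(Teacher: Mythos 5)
Your high-level strategy --- pass to the limit in a variational identity for the rescaled minimizers $\mathbf{v}_m$, handle the $F_m$ part by strong convergence, and kill the singular $f$-term --- is sound in spirit, and your treatment of the $F_m$ term is essentially correct. However, the very first step is not justified: the Euler--Lagrange identity
\[
\int_{B(0,r)} DF_m(D\mathbf{v}_m):D\mathbf{w}\, dz + \frac{r_m}{\lambda_m}\int_{B(0,r)} Df\bigl(a_m+r_mA_mz+\lambda_mr_m\mathbf{v}_m\bigr)\cdot\mathbf{w}\, dz = 0
\]
requires differentiating $s \mapsto \int f(\mathbf{u}(x_m+r_m z) + s\lambda_m r_m \mathbf{w})\,dz$ at $s=0$, and this presupposes that $Df(\mathbf{u}(x_m + r_m z))$ is integrable. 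As the paper explicitly remarks after Theorem~\ref{h2reg}, one does not know that $Df(\mathbf{u})$ is integrable; the argument $\mathbf{u}(x_m+r_m z)$ can lie arbitrarily close to $\partial\Bbb K$ on a set of positive measure, so the ``small amplitude $O(\lambda_m r_m)$'' of the perturbation does not save the differentiation. Your remark that ``truncating $\mathbf{v}_m$ in the spirit of \eqref{phi} forces the argument of $Df$ into $\Bbb K_{\epsilon_0/2}$'' does not repair this, because in the (hypothetical) Euler--Lagrange identity for $\mathbf{v}_m$ the argument of $Df$ is not a competitor one is free to replace; it is the given minimizer itself.

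The paper sidesteps this entirely: it never writes an Euler--Lagrange equation for $\mathbf{v}_m$. Instead it uses only the \emph{one-sided} inequality $I_r^m[\mathbf{v}_m]\le I_r^m[\tilde{\mathbf{v}}_m]$ for the cutoff competitor $\tilde{\mathbf{v}}_m=\rho\bm\varphi+(1-\rho)\mathbf{v}_m$, with $\bm\varphi$ smooth and bounded. Convexity of $f$, applied in two steps (Jensen for the convex combination, then a one-sided supporting inequality at the \emph{fixed} base point $a_m+r_mA_mz\in\Bbb K_{\epsilon_0/2}$), bounds the singular $f$-difference by
\[
C\left(\frac{r_m}{\lambda_m}|\bm\varphi-\mathbf{v}_m|+r_m^2|\bm\varphi|^2\right),
\]
which only involves $Df$ and $D^2f$ evaluated on the compact set $\overline{\Bbb K_{\epsilon_0/2}}$ where they are bounded. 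No integrability of $Df(\mathbf{u})$ is assumed. The resulting limit is the \emph{inequality} \eqref{var} for quadratic forms, not an equation, and only at the very end does one set $\tilde{\mathbf{v}}=\mathbf{v}+\lambda\mathbf{w}$, expand in $\lambda$, and send $\lambda\to 0^{\pm}$ to upgrade the inequality to the identity \eqref{sys}. That final two-sided step is legitimate precisely because by then the singular $f$ has disappeared and only the smooth quadratic form $D^2F(A)$ remains. If you carry out the truncation you allude to and use it honestly (truncate the \emph{competitor}, keep $\mathbf{v}_m$ as is, and use convexity from the good base point), you will reproduce the paper's argument; the shortcut via a first-variation identity for $\mathbf{v}_m$ does not work.
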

\begin{proof} 1. First we show that
for any $\bm\varphi \in C^\infty(B; \Bbb R^k)$,
\begin{equation}
\label{var}
\int_{B(0,r)} (D\tilde {\mathbf{v}})^T D^2F(A) D\tilde {\mathbf{v}} \,dz\ge \int_{B(0,r)} (D\mathbf{v})^T D^2F(A) D\mathbf{v}\,dz,
\end{equation}
for $\tilde {\mathbf{v}} = \rho \bm\varphi+(1-\rho)\mathbf{v}$, where $\rho$ is a cutoff function as in Lemma \ref{strong.conv}.

\medskip

To prove this, we set $\tilde {\mathbf{v}}_m :=\rho \bm\varphi + (1-\rho) \mathbf{v}_m$. According to \eqref{variational.principle.m},
$$I_r^m [\mathbf{v}_m] \le I_r^m[\tilde {\mathbf{v}}_m].
$$

 As before, the convexity of $f$ implies
 \begin{align}
 &\frac{1}{\lambda_m^2} [f(a_m+r_mA_mz+\lambda_m r_m\tilde {\mathbf{v}}_m)-f(a_m+r_mA_mz+\lambda_m r_m \mathbf{v}_m)] \notag \\
 & \le \frac{1}{\lambda_m^2} \rho [f(a_m+r_mA_mz+\lambda_m r_m  \bm\varphi)-f(a_m+r_mA_mz+\lambda_m r_m \mathbf{v}_m)] \notag\\
& \le  \frac{1}{\lambda_m^2} \rho [f(a_m+r_mA_mz)+\lambda_m r_m Df(a_m+r_mA_mz) \cdot
\bm\varphi+C |\lambda_m r_m \bm\varphi|^2-\notag\\
 &\qquad\qquad\qquad\qquad\qquad-f(a_m+r_mA_mz)-\lambda_m r_m Df(a_m+r_mA_mz)\cdot \mathbf{v}_m] \notag\\
& \le C   ( \frac{r_m}{\lambda_m}|\bm\varphi-\mathbf{v}_m| +  r_m^2 |\bm\varphi|^2).\notag
 \end{align}
Therefore
$$
\limsup_{m\to \infty} \frac{1}{\lambda_m^2} \int_{B(0,r)} f(a_m+r_mA_mz+\lambda_m r_m \tilde {\mathbf{v}}_m)-f(a_m+r_mA_mz+\lambda_m r_m  \mathbf{v}_m)\,dz \le 0.
$$

\medskip

Thus
$$
\int_{B(0,r)} F_m(D\mathbf{v}_m) \,dz\le \int_{B(0,r)} F_m(D\tilde {\mathbf{v}}_m)\,dz+o(1)
$$
as $m\to \infty$. Repeating the calculations before, the above inequality is equivalent to
\begin{align}
&\int_{B(0,r)} \int_0^1 \int_0^1 s (D\mathbf{v}_m)^T D^2F(A_m + ts\lambda_m D\mathbf{v}_m) D\mathbf{v}_m \,dt\,ds\,dz  \notag\\
\le &\int_{B(0,r)} \int_0^1 \int_0^1 s (D\tilde {\mathbf{v}}_m)^T D^2F(A_m + ts\lambda_m D\tilde {\mathbf{v}}_m) D\tilde {\mathbf{v}}_m \,dt\,ds\,dz+o(1).\notag
\end{align}
Lemma \ref{strong.conv} shows that $D\mathbf{v}_m \to D\mathbf{v}$ in $L^2_{loc}$. Letting $m\to \infty$,
we derive the inequality \eqref{var}.

\medskip

2. By  approximation  we see that \eqref{var} is still valid for $\tilde {\mathbf{v}} = \mathbf{v} + \lambda \mathbf{w}$ for $\mathbf{w} \in C_c^\infty(B(0,s))$ and $\lambda >0$.
 Hence
$$
\int_{B(0,r)} (D\mathbf{v}+\lambda D\mathbf{w})^T D^2 F(A) (D\mathbf{v}+\lambda D\mathbf{w})\,dz \ge \int_{B(0,r)} (D\mathbf{v})^T D^2F(A) D\mathbf{v}\,dz.
$$
We expand out the left hand side and cancel the terms that do not involve $\lambda$. Dividing  by $\lambda >0$ and then sending $\lambda \to 0$, we find that
$$
\int_{B(0,r)} (D\mathbf{w})^T D^2F(A) D\mathbf{v}\,dz \ge 0
$$
Replacing $ \mathbf{w}$ with $ -\mathbf{w}$, we get the reverse inequality, and so
 \eqref{sys} follows.
\end{proof}

\medskip

{\bf 2.2 Iteration}

We next recursively apply Theorem 2.1 on smaller and smaller concentric balls.

\begin{lem}\label{add}
Given  $L>0$, let $C_1= C(2L)$ is the constant from Theorem 2.1. Then for each  $\tau$ satisfying
\begin{equation}
0<\tau <\min \left (\frac{1}{8},\frac{1}{4C_1^2} \right),
\end{equation}
there exists $\eta = \eta(L,\tau)>0$ such that
\begin{equation}
|(\mathbf{u})_{x,r}|, |f((\mathbf{u})_{x,r})|, |(D\mathbf{u})_{x,r}|\le L
\end{equation}
and
\begin{equation}
E(x,r) \le \eta
\end{equation}
imply
\begin{equation}
E(x,\tau^l r) \le C_1\tau^{1/2} E(x,\tau^{l-1}r)  \qquad (l = 1, \dots)
\end{equation}
for each ball $B(x,r) \subset U$.
\end{lem}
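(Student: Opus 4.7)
The plan is to argue by induction on $l$, applying Theorem 2.1 at each scale $\tau^{j-1}r$ with the constant $L$ in that theorem replaced by $2L$. Let $\epsilon_0=\epsilon(2L,\tau)$ and $C_1=C(2L)$ be the constants from Theorem 2.1 with input $2L$, and choose $\eta\le \epsilon_0$ so small (depending only on $L,\tau$) that the three quantitative smallness bounds derived below hold. Since $\tau<1/(4C_1^2)$, the geometric factor satisfies $C_1\tau^{1/2}<1/2$, so the telescoping sums encountered in the inductive step will be summable.

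In the inductive step, assume the conclusion $E(x,\tau^jr)\le C_1\tau^{1/2}E(x,\tau^{j-1}r)$ has been verified for $j=1,\dots,l-1$. Iterating gives $E(x,\tau^jr)\le 2^{-j}E(x,r)\le 2^{-j}\eta$ for such $j$. The crux is to check that the three averages over $B(x,\tau^{l-1}r)$ are all bounded by $2L$, so that Theorem 2.1 can legitimately be applied at scale $\tau^{l-1}r$.

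For the gradient average I use the standard telescoping estimate
$$|(D\mathbf{u})_{x,\tau^{j+1}r}-(D\mathbf{u})_{x,\tau^jr}|^2\le \dashint_{B(x,\tau^{j+1}r)}|D\mathbf{u}-(D\mathbf{u})_{x,\tau^jr}|^2\,dz\le \tau^{-n}E(x,\tau^jr),$$
which yields $|(D\mathbf{u})_{x,\tau^{l-1}r}|\le L+C\tau^{-n/2}\eta^{1/2}\sum_{j\ge 0}2^{-j/2}\le 2L$, provided $\eta$ is chosen small (depending on $\tau,n,L$). For the function average I combine Poincar\'e's inequality with the above:
$$|(\mathbf{u})_{x,\tau^{j+1}r}-(\mathbf{u})_{x,\tau^jr}|\le C\tau^jr\bigl(|(D\mathbf{u})_{x,\tau^jr}|^2+E(x,\tau^jr)\bigr)^{1/2}\le C\tau^jr(2L+\eta^{1/2}).$$
Summing the geometric series and using $r^{1/2}\le E(x,r)\le \eta$ (so $r\le \eta^2$) makes the total increment arbitrarily small with $\eta$; in particular $|(\mathbf{u})_{x,\tau^{l-1}r}|\le 2L$ and $(\mathbf{u})_{x,\tau^{l-1}r}$ lies within distance $\delta_0/2$ of $(\mathbf{u})_{x,r}$, where $\delta_0>0$ will be fixed in the next step.

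For the singular term $f((\mathbf{u})_{x,\tau^{l-1}r})$, the hypothesis $|(\mathbf{u})_{x,r}|,|f((\mathbf{u})_{x,r})|\le L$ together with the blow-up assumption \eqref{f.hypotheses.blowup} shows that $(\mathbf{u})_{x,r}$ lies in a compact subset $K_0\subset\Bbb K$ with $\mathrm{dist}(K_0,\partial \Bbb K)\ge \delta_0=\delta_0(L)>0$. Since $f$ is smooth on $\Bbb K$, it has bounded gradient on $\{z\in\Bbb K:\mathrm{dist}(z,\partial\Bbb K)\ge \delta_0/2\}$ by some constant $M=M(L)$. Combining with the proximity estimate from the previous paragraph,
$$|f((\mathbf{u})_{x,\tau^{l-1}r})-f((\mathbf{u})_{x,r})|\le M\,|(\mathbf{u})_{x,\tau^{l-1}r}-(\mathbf{u})_{x,r}|,$$
which is $\le L$ provided $\eta$ is taken small enough, depending on $L,\tau,M,\delta_0$. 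Hence $|f((\mathbf{u})_{x,\tau^{l-1}r})|\le 2L$. With all three hypotheses of Theorem 2.1 (at level $2L$) now verified and $E(x,\tau^{l-1}r)\le \eta\le \epsilon_0$, Theorem 2.1 produces $E(x,\tau^lr)\le C_1\tau^{1/2}E(x,\tau^{l-1}r)$, closing the induction. The main obstacle is the simultaneous propagation of all three hypothesis bounds; the bound on $|f((\mathbf{u})_{x,\tau^{l-1}r})|$ is the delicate one because $f$ is singular, but it is handled by using the initial $f$-bound at scale $r$ to keep the whole iteration in a compact subset of $\Bbb K$ where $f$ is Lipschitz.
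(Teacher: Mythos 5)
Your proof is correct and follows essentially the same strategy as the paper: induction propagating the three average bounds to scale $2L$, telescoping estimates for $(\mathbf u)$ and $(D\mathbf u)$ driven by the geometric decay $C_1\tau^{1/2}<1/2$, the observation $r\le\eta^2$ to control the function-average drift, and the crucial step of using the initial bound on $f((\mathbf u)_{x,r})$ plus the blow-up hypothesis to confine the entire iteration to a compact subset of $\Bbb K$ on which $f$ is Lipschitz. The paper packages this last point as a one-line consequence of (H1) (their property that $f(a)<L$ and $|a-b|\le\epsilon_1(L)$ imply $f(b)<2L$), whereas you unfold it explicitly via $\delta_0(L)$ and a local Lipschitz constant $M(L)$, but the content is the same.
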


\medskip

\begin{proof} 1. We first note from hypothesis (H1) that
for each $L>0$, there exists $\epsilon_1 = \epsilon_1(L)\in (0,1)$ such that
\begin{equation}
\label{prop.g}
\mbox{ if  $f(a)<L$ and $ |a-b| \le \epsilon_1$, then $ f(b) <2L$.}
\end{equation}

Let $\epsilon_2 =\epsilon(2L,\tau)$ be as in Theorem 2.1.
Define
$$
\eta =\min \left ( L^2,\epsilon_2,\left(\frac{\epsilon_1(1-\tau)\tau^n}{(1+L)C_2}\right)^{1/2}, \left (\tau^n L(1-C_1^{1/2}\tau^{1/4})\right )^2 \right ),
$$
the constant $C_2$ to be selected below.
\medskip

2. We assert next  that the following inequalities hold for all $l \ge 0$:
\begin{equation}\label{ind1}
|(\mathbf{u})_{x,\tau^l r}| \le 2L,
\end{equation}
\begin{equation}\label{ind2}
|f((\mathbf{u})_{x,\tau^l r})| \le 2L,
\end{equation}
\begin{equation}\label{ind3}
|(D\mathbf{u})_{x,\tau^l r}| \le 2L,
\end{equation}
\begin{equation}\label{ind5}
E(x,\tau^l r) \le \eta \le \epsilon_2.
\end{equation}

\medskip The proof is by induction, the case
 $l=0$ being the hypothesis.
Assume next  that \eqref{ind1}-\eqref{ind5} are valid for $l=0,1,...,p-1$;
we will show that are also valid for $l=p$.

\medskip

{\bf Proof of \eqref{ind1}:} Poincar\'e's inequality implies for each $l \le p-1$ that
\begin{align}
\label{ind6}
|(\mathbf{u})_{x,\tau^{l+1}r}-(\mathbf{u})_{x,\tau^l r}|
& \le \dashint_{B(x,\tau^{l+1} r)} |\mathbf{u}-(\mathbf{u})_{x,\tau^l r}|\, dy\\
 &\le \left(\dashint_{B(x,\tau^{l+1} r)} |\mathbf{u}-(\mathbf{u})_{x, \tau^l r}|^2 \,dy \right)^{1/2}\notag\\
 &\le  \frac{1}{\tau^n} \left (\dashint_{B(x,\tau^l r)} |\mathbf{u}-(\mathbf{u})_{x, \tau^l r}|^2 \,dy \right)^{1/2}\notag \\
& \le \frac{C \tau^l r} {\tau^n} \left (\dashint_{B(x,\tau^l r)} |D\mathbf{u}|^2 \,dy \right )^{1/2}.\notag
\end{align}
The induction hypothesis then gives
\begin{align*}
\dashint_{B(x,\tau^l r)} |D\mathbf{u}|^2\, dy & \le 2 \dashint_{B(x,\tau^l r)}
 |D\mathbf{u}-(D\mathbf{u})_{x,\tau^l r}|^2 +|(D\mathbf{u})_{x,\tau^l r}|^2\,dy  \\
 & \le 2(\eta + (2L)^2)\le 10L^2.
\end{align*}

Plugging  into \eqref{ind6}, we  find that
\begin{equation}
\label{ind7}
|(\mathbf{u})_{x,\tau^{l+1}r}-(\mathbf{u})_{x,\tau^l r}| \le \frac{C_2Lr \tau^l}{\tau^n}.
\end{equation}
Thus
\begin{align*}
|(\mathbf{u})_{x,\tau^p r}| & \le |(\mathbf{u})_{x,r}|+\sum_{l=0}^{p-1} |(\mathbf{u})_{x,\tau^{l+1}r}-(\mathbf{u})_{x,\tau^l r}| \notag\\
& \le  L + \frac{C_2Lr}{\tau^n} \sum_{l=0}^{p-1} \tau^l \le L \left(1+\frac{C_2 r}{\tau^n(1-\tau)} \right) \le 2L,
\end{align*}
since
\[
r \le E(x,r)^2 \le \eta^2 \le \frac{(1-\tau)\tau^n}{C_2}.
\]

\medskip

{\bf Proof of \eqref{ind2}:} Using \eqref{ind7}, we see that
\begin{equation}
\label{ind8}
|(\mathbf{u})_{x,\tau^p r}-(\mathbf{u})_{x,r}| \le \sum_{l=0}^{p-1} |(\mathbf{u})_{x,\tau^{l+1}r}-(\mathbf{u})_{x,\tau^l r}| \le \frac{C_2 Lr}{\tau^n(1-\tau)} \le \epsilon_1,
\end{equation}
 the last inequality holding since
$$
r \le E(x,r)^2 \le \eta^2 \le \frac{\epsilon_1(1-\tau)\tau^n}{(1+L)C_2} \le \frac{\epsilon_1(1-\tau)\tau^n}{LC_2}.
$$
So \eqref{ind8} and \eqref{prop.g} imply $|f((\mathbf{u})_{x,\tau^{p}r})| \le 2L$.


\medskip
{ \bf Proof of \eqref{ind3}:} For $l \le p-1$, using the induction hypothesis and Lemma \ref{decay}  we have
\begin{align*}
|(D\mathbf{u})_{x,\tau^{l+1}r}-(D\mathbf{u})_{x,\tau^l r}|
& \le  \frac{1}{\tau^n} \left(\dashint_{B(x,\tau^l r)} |D\mathbf{u}-(D\mathbf{u})_{x, \tau^l r}|^2\, dy \right)^{1/2} \\
& \le \frac{1}{\tau^n}\,E(x,\tau^l r)^{1/2} \\
&  \le \frac{1}{\tau^n} [C_1^{1/2} \tau^{1/4}]^l \eta^{1/2}.\notag
\end{align*}
Therefore
\begin{align*}
|(D\mathbf{u})_{x,\tau^{p+1}r}| & \le |(D\mathbf{u})_{x,r}|+\sum_{l=0}^{p-1} |(D\mathbf{u})_{x,\tau^{l+1}r}-(D\mathbf{u})_{x,\tau^l r}| \notag\\
&  \le L + \frac{1}{\tau^n}\eta^{1/2} \sum_{l=0}^{p-1} [C_1^{1/2} \tau^{1/4}]^l \\
& \le L+\frac{\eta^{1/2}}{\tau^n(1-C_1^{1/2} \tau^{1/4})} \le 2L, \notag
\end{align*}
since $\eta^{1/2} \le \tau^n L  (1-C_1^{1/2} \tau^{1/4})$.

\medskip
{\bf  Proof of \eqref{ind5}:}  the induction hypothesis and Lemma \ref{decay} yield
$$
E(x,\tau^p r) \le (C_1 \tau^{1/2})^p \eta \le \eta.
$$

\medskip
3. Finally combining \eqref{ind1}-\eqref{ind5} and Lemma \ref{decay} one immediately obtains the lemma.
\end{proof}

\medskip
{\bf 2.3 Partial regularity.}
We are at last ready to state and prove our main assertion of partial regularity:
\begin{thm}\label{main}
There exists an open set $U_0 \subset U$ such that
\[
|U- U_0|=0
\] and, for every $\alpha \in (0,1/4)$,
\[
\mathbf{u} \in C^{1,\alpha}(U_0; \Bbb R^k).
\]
\end{thm}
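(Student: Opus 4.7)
The plan is the standard Campanato-type deduction from the iterated decay in Lemma \ref{add}. Fix $\alpha \in (0,1/4)$, so that $\tfrac12 - 2\alpha > 0$, and choose $\tau \in (0, \min(\tfrac18, \tfrac{1}{4C_1^2}))$ small enough that $C_1 \tau^{1/2} \le \tau^{2\alpha}$. Define the \emph{regular set}
\[
U_0 := \bigl\{ x_0 \in U \,:\, \exists\, r>0,\ L>0 \text{ with } B(x_0,r)\subset U,\ |(\mathbf{u})_{x_0,r}|, |f((\mathbf{u})_{x_0,r})|, |(D\mathbf{u})_{x_0,r}| \le L,\ E(x_0,r) < \eta(L,\tau) \bigr\}.
\]
First I would check that $U_0$ is open. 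For fixed $r$, the averages $x \mapsto (\mathbf{u})_{x,r}$, $x \mapsto (D\mathbf{u})_{x,r}$, and the function $x \mapsto E(x,r)$ are continuous on $\{x : B(x,r)\subset U\}$ by the $L^2$ continuity of translation; moreover $z\mapsto f(z)$ is continuous on the open set $\Bbb K$. Therefore the four strict inequalities defining $U_0$ persist under small perturbations of $x_0$ (keeping $r$ fixed and decreasing the radius slightly if necessary to stay inside $U$), so $U_0$ is open.

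Next I would verify $|U \setminus U_0|=0$ by Lebesgue differentiation. Since $I[\mathbf{u}]<\infty$, we have $\mathbf{u}(x)\in \Bbb K$ for almost every $x$, hence $f(\mathbf{u}(x))<\infty$ a.e. At any point $x$ that is simultaneously a Lebesgue point of $\mathbf{u}$ and of $D\mathbf{u}$, with $\mathbf{u}(x)\in \Bbb K$ and $|D\mathbf{u}(x)|<\infty$, we have $(\mathbf{u})_{x,r}\to \mathbf{u}(x)$, $(D\mathbf{u})_{x,r}\to D\mathbf{u}(x)$, $f((\mathbf{u})_{x,r})\to f(\mathbf{u}(x))$ by continuity of $f$ on $\Bbb K$, and $\dashint_{B(x,r)} |D\mathbf{u}-(D\mathbf{u})_{x,r}|^2\,dy \to 0$. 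Choosing $L := 1 + |\mathbf{u}(x)| + |D\mathbf{u}(x)| + f(\mathbf{u}(x))$ and then $r$ small enough (depending on $x$) so that $E(x,r) < \eta(L,\tau)$ and the three averages sit strictly below $L$, we conclude $x\in U_0$. The set of such points has full measure, so $|U \setminus U_0|=0$.

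Finally I would prove the $C^{1,\alpha}$ regularity via Campanato. Fix $x_0 \in U_0$ with witnessing parameters $(r,L,\eta)$. By openness, the same constants $(L,\eta)$ work for every $x$ in some open neighborhood $V$ of $x_0$ and every radius in some interval $(0,r_0]$. Applying Lemma \ref{add} at each such $x$ and iterating gives
\[
E(x,\tau^l r_0) \le (C_1 \tau^{1/2})^l E(x,r_0) \le \tau^{2\alpha l}\eta \qquad (l=0,1,2,\dots).
\]
For an arbitrary $s \in (0,r_0]$ choose $l$ with $\tau^{l+1} r_0 < s \le \tau^l r_0$; the elementary averaging bound gives
\[
\dashint_{B(x,s)} |D\mathbf{u} - (D\mathbf{u})_{x,s}|^2\,dy \le \tau^{-n}\, E(x, \tau^l r_0) \le C(\tau,n,r_0,\eta)\, s^{2\alpha}
\]
uniformly for $x \in V$. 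By Campanato's characterization of H\"older spaces, $D\mathbf{u} \in C^{0,\alpha}(V;\Bbb M^{k\times n})$, hence $\mathbf{u}\in C^{1,\alpha}(V;\Bbb R^k)$. Covering $U_0$ by such neighborhoods yields $\mathbf{u} \in C^{1,\alpha}(U_0;\Bbb R^k)$.

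The only mildly subtle step is the measure-zero assertion $|U\setminus U_0|=0$: one must use the continuity of $f$ on the \emph{open} set $\Bbb K$ together with the fact that $\mathbf{u}(x) \in \Bbb K$ a.e.\ to guarantee that the blow-up of $f$ at $\partial \Bbb K$ does not interfere with the Lebesgue-point argument. The constraint $\alpha<1/4$ comes directly from the $r^{1/2}$ term built into $E(x,r)$, which caps the obtainable decay rate at $s^{1/2}$, i.e.\ $s^{2\alpha}$ with $\alpha < 1/4$.
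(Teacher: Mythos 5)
Your proposal follows essentially the same strategy as the paper: iterate the excess-decay Lemma \ref{add}, invoke Campanato's characterization of H\"older continuity, and identify $U_0$ via the Lebesgue-point set together with the $L^1$ integrability of $f(\mathbf{u})$. The Campanato computation and the full-measure argument are correct as written.

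There is, however, a quantifier gap in the definition of $U_0$. You fix $\alpha$ first, which determines $\tau$, which determines $\eta(L,\tau)$; only then do you define $U_0$ in terms of $\eta(L,\tau)$. So the set you construct depends a priori on $\alpha$ --- call it $U_0^\alpha$. Your argument therefore establishes that for each $\alpha \in (0,1/4)$ there exists an open full-measure set $U_0^\alpha$ with $\mathbf{u} \in C^{1,\alpha}(U_0^\alpha)$, which is $\forall\exists$, whereas the theorem is the stronger $\exists\forall$ statement: a single open $U_0$ working simultaneously for every $\alpha$. The paper sidesteps this by defining $U_0$ intrinsically as the set of Lebesgue-type points of $\mathbf{u}$ and $D\mathbf{u}$ with $f(\mathbf{u}(x)) < \infty$ and vanishing excess, a definition making no reference to $\tau$ or $\eta$; the constants $\alpha, \tau, \eta$ then appear only in the subsequent local decay argument around each point of $U_0$. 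Alternatively, you could close the gap by observing that all your sets $U_0^\alpha$ coincide: once the iterated decay holds at a point for one admissible $\tau$, the averages $(D\mathbf{u})_{x,s}$ and $(\mathbf{u})_{x,s}$ stabilize and $E(x,s)\to 0$ as $s\to 0$, so the hypotheses of Lemma \ref{add} for any other admissible $\tau'$ (hence any other $\alpha$) are eventually met as well.

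A smaller expositional point: you define $U_0$ with nonstrict inequalities $\le L$ but your openness argument invokes ``strict inequalities persist.'' Since $\eta(L,\tau)$ is not obviously monotone in $L$, simply enlarging $L$ after perturbing $x_0$ is not automatic. The paper's device is cleaner: at $x_0$ choose $L$ and $R$ with the averages strictly below $L$ for all $s<R$, then choose $r<R$ so small that $E(x_0,r) < \eta(L,\tau)$; continuity of the averaging maps at fixed radius $r$ then propagates all the strict inequalities with the \emph{same} $L$ to nearby $x$.
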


\medskip

\begin{proof}
 1. Set
\begin{align*}
U_0 : & =\left \{x \in U \mid \lim_{r\to 0} (\mathbf{u})_{x,r}=\mathbf{u}(x),\, \lim_{r\to 0} (D\mathbf{u})_{x,r}=D\mathbf{u}(x), |\mathbf{u}(x)|<\infty,  \right. \\
&\qquad  \qquad  \left.  |D\mathbf{u}(x)|<\infty , \,f(\mathbf{u}(x))<\infty ,\,\lim_{r\to 0} \dashint_{B(x,r)} |D\mathbf{u}-(D\mathbf{u})_{x,r}|^2\,dy=0 \right \}.
\end{align*}
Then $|U - U_0|=0$, since $\mathbf{u} \in H^1(U)$ and $\int_U f(\mathbf{u})\,dx<\infty$.

\medskip
2. We assert that $U_0$ is open and $D\mathbf{u} \in C^\alpha(U_0)$ for $0<\alpha<1/4.$

\medskip

For each $x \in U_0$, there exist $L=L(x)$ and $R=R(x) \in (0, \text{dist}(x,\partial U))$ such that
\begin{align}
\label{mt1}
\begin{cases}
 |(\mathbf{u})_{x,s}|, |(D\mathbf{u})_{x,s}| <L \  \mbox{ for all }\,0<s<R. \\
 |f((\mathbf{u})_{x,s})| < L  \   \mbox{ for all }\,0<s<R.
 \end{cases}
\end{align}
Fix $\alpha \in (0,1/4)$. Take $\tau \in \left(0,\min \left(\frac{1}{8},\frac{1}{4C_1^2}\right)\right)$ such that
$$
C_1 \tau^{1/2-2\alpha} <1.
$$

We then can choose  $0<r<R$ so  small enough  that
\begin{equation}
\label{mt2}
E(x,r)=r^{1/2}+\dashint_{B(x,r)} |D\mathbf{u}-(D\mathbf{u})_{x,r}|^2 \,dy<\eta(L,\tau),
\end{equation}
where $\eta$ has been constructed in Lemma \ref{add}.

In summary, \eqref{mt1} and \eqref{mt2} imply
\begin{equation}\label{mt3}
|(\mathbf{u})_{x,r}|, |f((\mathbf{u})_{x,r})| , |(D\mathbf{u})_{x,r}|<L, \text{ and } E(x,r)<\eta(L,\tau).
\end{equation}

Moreover,  the following mappings
$$
x \mapsto (\mathbf{u})_{x,r}, f((\mathbf{u})_{x,r}) , (D\mathbf{u})_{x,r},  E(x,r)
$$
are continuous. Hence \eqref{mt3} holds for $z \in B(x,s)$ for some $s>0$.

Applying Lemma \ref{add}, for any $z \in B(x,s)$
$$
E(z,\tau^l r) \le (c(2L)\tau^{1/2})^l \eta(L,\tau) \le (\tau^l r)^{2\alpha} \eta_1(L,\tau,r)
$$
for $l = 1, \dots$, where $\eta_1(L,\tau,r)=\eta(L,\tau) r^{-2\alpha}$.
The previous estimate now implies (cf. for instance \cite{Gia1}) that $D\mathbf{u} \in C^\alpha$ near $x.$
This immediately shows that $\mathbf{u}\in C^{\alpha}(U_0)$ and that $U_0$ is open.
\end{proof}

\end{section}

\begin{section}{Partial regularity for the general problem}

In this section we return to the general functional
 $$I(\mathbf{v})=\int_U F(\mathbf{v},D\mathbf{v})+f(\mathbf{v}) \, dx $$
 and outline the requisite modifications in the previous proof of partial regularity.

\begin{thm}\label{thm.reg.C.1.alpha.with.param} Let $\mathbf{u}$ be a minimizer of $I[ \, \cdot \,]$.

Then there exists an open set $U_0\subset U$ such that $$|U-U_0|=0$$ and, for each $\alpha\in (0,1/4),$  $$\mathbf{u}\in C^{1,\alpha}(U_0; \Bbb R^k).$$
\end{thm}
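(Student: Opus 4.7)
The plan is to follow exactly the scheme developed in Section~2 for the model problem $F=F(P)$, tracking how the additional dependence on $z$ in $F=F(z,P)$ affects each step. The Lipschitz-in-$z$ hypothesis $|F(z,P)-F(\hat z,P)|\le C(1+|P|^2)|z-\hat z|$ from (H2) is precisely the tool designed to absorb the new error terms that appear.

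First I would prove the analog of Theorem~\ref{decay} (the linear-approximation/$E$-decay estimate). Arguing by contradiction as in Section~2, I freeze the $z$-variable of $F$ at $a_m:=(\mathbf{u})_{x_m,r_m}$ and blow up via $\mathbf{v}_m(z)=(\mathbf{u}(x_m+r_mz)-a_m-r_mA_mz)/(\lambda_m r_m)$. The rescaled integrand becomes
\[
F_m(P):=\frac{F(a_m,A_m+\lambda_m P)-F(a_m,A_m)-\lambda_m D_PF(a_m,A_m):P}{\lambda_m^2}.
\]
Now $\mathbf{v}_m$ is only an \emph{almost}-minimizer on $B$ of the same functional $I_r^m$ as in Section~2, because in the true minimization the first slot of $F$ is $\mathbf{u}(x_m+r_mz)$ rather than $a_m$. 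By (H2) the discrepancy between $F(\mathbf{u}(x_m+r_mz),\cdot)$ and $F(a_m,\cdot)$ is pointwise bounded by $C(1+|\cdot|^2)|\mathbf{u}(x_m+r_mz)-a_m|$; since $|\mathbf{u}(x_m+r_mz)-a_m|\le Cr_m(|A_m|+\lambda_m|\mathbf{v}_m|)$ and the scaling \eqref{estimate.radius} gives $r_m\le\lambda_m^4$, after division by $\lambda_m^2$ this perturbation becomes $o(1)$ on bounded competitors, so the analog of \eqref{variational.principle.m} holds with an additional additive $o(1)$ term.

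With this in hand, the proofs of Lemma~\ref{strong.conv} and Lemma~\ref{ELsys} carry over essentially verbatim: all the $f$-terms are treated identically, while the only new ingredient in the $F$-comparison is the additional $o(1)$ remainder above, which is absorbed into the existing bound $C\mu(\overline{B(0,R)-B(0,r)})+o(1)$. The limit $\mathbf{v}$ is then a weak solution of the constant-coefficient elliptic system
\[
\mathrm{div}\!\left(D_P^2 F(a,A)\,D\mathbf{v}\right)=0,
\]
which is Legendre--Hadamard elliptic by the uniform strict quasiconvexity of $F(z,\cdot)$ at the fixed $z=a$ together with the bound $|D_P^2F|\le C$. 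Schauder theory then yields the quadratic smallness of $\dashint_{B(0,\tau)}|D\mathbf{v}-(D\mathbf{v})_{0,\tau}|^2\,dz$, and the contradiction closes exactly as in Theorem~\ref{decay}. The iteration Lemma~\ref{add} and the final extraction of $U_0$ (Theorem~\ref{main}) use only the decay estimate together with the integrability $\mathbf{u},D\mathbf{u}\in L^2$, $f(\mathbf{u})\in L^1$, so they transfer without change.

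The main obstacle is confined to the first step: verifying quantitatively that the coupling between $z$ and $P$ in $F$ is truly a lower-order perturbation of the model-problem analysis. What makes it work is the super-critical scaling $r_m=o(\lambda_m^2)$ inherited from $r_m^{1/2}\le\lambda_m^2$, combined precisely with the $(1+|P|^2)$-Lipschitz hypothesis in (H2). Once this bookkeeping is in place, the rest of Section~2 transfers mutatis mutandis.
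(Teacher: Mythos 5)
Your high-level scheme coincides with the paper's: freeze $F$ at $z=a_m$, view the residual $z$-dependence as a perturbation controlled by the Lipschitz-in-$z$ clause of (H2), and then invoke the model-problem analysis of Section~2 verbatim. The paper packages exactly this reduction into Lemma~\ref{lem.red.case.with.no.param}, after which Theorem~\ref{thm.reg.C.1.alpha.with.param} is obtained precisely as you describe.

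There is, however, a genuine gap in the step you single out as the crux. Expanding the pointwise bound you propose,
\[
\frac{1}{\lambda_m^2}\,C\bigl(1+|A_m+\lambda_m D\mathbf{w}_m|^2\bigr)\bigl(r_m|A_m|+\lambda_m r_m|\mathbf{w}_m|\bigr)
\;\lesssim\;
\frac{r_m}{\lambda_m^2}+\frac{r_m}{\lambda_m}|\mathbf{w}_m|+r_m|D\mathbf{w}_m|^2+\lambda_m r_m|D\mathbf{w}_m|^2|\mathbf{w}_m|,
\]
the first three terms are indeed killed by $r_m\le\lambda_m^4$ and the $H^1$-bound on $\mathbf{w}_m$. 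But the last term contains the cubic quantity $|D\mathbf{w}_m|^2|\mathbf{w}_m|$, which is \emph{not} integrable uniformly in $m$ for a sequence that is merely bounded in $H^1$: H\"older against the Sobolev exponent fails (one would need $D\mathbf{w}_m$ bounded in $L^p$ for some $p>2$), and the competitors $\mathbf{w}_m=\mathbf{v}_m$ in the adaptations of Lemma~\ref{strong.conv} and Lemma~\ref{ELsys} are not $L^\infty$-bounded. So the assertion that the perturbation is ``$o(1)$ on bounded competitors'' does not follow from the scaling alone. The paper circumvents this by first sharpening the (H2)-bound to $|F(z,P)-F(\hat z,P)|\le C(1+|P|^2)\min\{1,|z-\hat z|\}$, which replaces the cubic term by $\lambda_m^2|D\mathbf{w}_m|^2\min\{1,r_m+r_m\lambda_m|\mathbf{w}_m|\}$, and then kills that term with an Egoroff/absolute-continuity argument (Lemma~\ref{lem.red.case.with.no.param}, step~2): the $\min$ tends to zero uniformly off a set of small measure, while the contribution of the exceptional set is controlled by absolute continuity of the $L^1$ norm of $|D\mathbf{w}|^2$. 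This extra mechanism is indispensable; without it, the reduction to the model problem does not close.

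Once Lemma~\ref{lem.red.case.with.no.param} is in place, the remainder of your outline---adaptation of Theorem~\ref{decay}, Lemma~\ref{strong.conv}, Lemma~\ref{ELsys}, the elliptic system $\operatorname{div}\bigl(D_P^2F(a,A)D\mathbf{v}\bigr)=0$, the iteration Lemma~\ref{add}, and the extraction of the regular set---is faithful to the paper and correct.
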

We start by an elementary lemma which will allow us to reduce the problem to the model problem ($F=F(P)$).

\begin{lem}\label{lem.red.case.with.no.param} Let $a_m\in \mathbb{R}^k$ and $A_m\in \mathbb{M}^{k\times n}$ be bounded and let $\mathbf{w}_m\in H^1(B;\mathbb{R}^k)$ be bounded.
Let $r_m,\lambda_m>0$ be such that $r_m\leq \lambda_m^4\rightarrow 0$ as $m\rightarrow \infty$.

Then, upon passing if necessary to a subsequence, we have
\begin{align*}&\int_B|F(a_m,A_m+\lambda_mD\mathbf{w}_m)-
F(a_m+r_mA_mz+\lambda_mr_m\mathbf{w}_m,A_m+\lambda_mD\mathbf{w}_m)|dz\\
&\qquad \qquad \qquad \qquad\qquad \qquad\qquad \qquad\qquad \qquad\qquad \qquad\qquad \qquad=o(\lambda_m^2).
\end{align*}
\end{lem}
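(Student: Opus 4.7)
My plan is to apply the Lipschitz-in-$z$ hypothesis from (H2), namely $|F(z,P)-F(\hat z,P)| \le C(1+|P|^2)|z-\hat z|$, pointwise to the integrand. Writing $P_m := A_m + \lambda_m D\mathbf{w}_m$ and $\tau_m(z) := r_m A_m z + \lambda_m r_m \mathbf{w}_m(z)$, this yields
$$|F(a_m, P_m) - F(a_m + \tau_m, P_m)| \le C(1+|P_m|^2)|\tau_m|$$
pointwise on $B$. Using the uniform boundedness of $\{a_m\}$, $\{A_m\}$ together with the elementary inequalities $1+|P_m|^2 \le C(1 + \lambda_m^2 |D\mathbf{w}_m|^2)$ and $|\tau_m| \le Cr_m(1 + \lambda_m |\mathbf{w}_m|)$, the integrand is dominated by $Cr_m(1+\lambda_m|\mathbf{w}_m|)(1+\lambda_m^2|D\mathbf{w}_m|^2)$, which expands into four manageable monomials.

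Integrating over $B$ and invoking the $H^1$-bound on $\{\mathbf{w}_m\}$ (to control $\|\mathbf{w}_m\|_{L^2}$ and $\|D\mathbf{w}_m\|_{L^2}$), the first three pieces each contribute at most $Cr_m(1+\lambda_m+\lambda_m^2)\le Cr_m\le C\lambda_m^4$; after dividing by $\lambda_m^2$ these are $O(\lambda_m^2)=o(1)$ since $r_m\le\lambda_m^4$. The only remaining piece, the cross-term, becomes $C\lambda_m^5\int_B|\mathbf{w}_m||D\mathbf{w}_m|^2\,dz$, which an $H^1$ bound alone does not control. Here I would pass to a subsequence so that $\mathbf{w}_m\to\mathbf{w}$ in $L^2$ and a.e., and (using $\|\lambda_mD\mathbf{w}_m\|_{L^2}\to 0$) also $\lambda_mD\mathbf{w}_m\to 0$ a.e. Then $\tau_m\to 0$ and $P_m\to A$ a.e., so the rescaled integrand $\tfrac{1}{\lambda_m^2}|F(a_m,P_m)-F(a_m+\tau_m,P_m)|$ converges to zero pointwise a.e., and Vitali's convergence theorem closes the argument. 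The requisite equi-integrability is secured by a Chebyshev-type truncation: split $B$ into $\{|D\mathbf{w}_m|^2\le M\}$ (where the truncated integrand is dominated by $CM\lambda_m r_m\|\mathbf{w}_m\|_{L^1}=o(\lambda_m^2)$ for each fixed $M$) and its complement of measure $\le C/M$, then send $M\to\infty$.

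\textbf{Main obstacle.} The crux is controlling the cross-term $\lambda_m^5\int_B|\mathbf{w}_m||D\mathbf{w}_m|^2\,dz$; the other three monomials evaporate directly from the $H^1$ bound and $r_m\le\lambda_m^4$. Note that $\int_B|\mathbf{w}_m||D\mathbf{w}_m|^2$ can genuinely blow up along $H^1$-bounded sequences, so the argument must rely on the pointwise-a.e.\ cancellation rather than a uniform $L^1$ bound. This is precisely why the hypothesis is the quartic $r_m\le\lambda_m^4$ (rather than the weaker $r_m^{1/2}\le\lambda_m^2$ which suffices in Section 2): the extra powers of $\lambda_m$ provide the margin needed for the truncation-and-Vitali step to close.
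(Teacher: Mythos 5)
Your plan shares the paper's starting point---apply the Lipschitz-in-$z$ hypothesis from (H2) to the integrand---and you correctly identify the cross-term involving $|\mathbf{w}_m|\,|D\mathbf{w}_m|^2$ as the crux. But the Vitali/Chebyshev mechanism you propose does not close, and this is a genuine gap.

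On $\{|D\mathbf{w}_m|^2>M\}$ the quantity $\int_{\{|D\mathbf{w}_m|^2>M\}}|\mathbf{w}_m|\,|D\mathbf{w}_m|^2\,dz$ is not controlled by the $H^1$ bound, and the $\lambda_m^5$ prefactor cannot save it: the cross-term can grow arbitrarily fast in $m$, and indeed for $\mathbf{w}_m\in H^1(B;\mathbb{R}^k)$ with $n\ge 2$ the product $|\mathbf{w}_m|\,|D\mathbf{w}_m|^2$ need not even lie in $L^1(B)$. So the hypotheses for Vitali's theorem (or for any dominated-convergence argument) are not in place, and the Chebyshev split leaves an uncontrolled piece on the complement.

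The paper handles exactly this point by capping the $z$-variation: instead of using only $|F(z,P)-F(\hat z,P)|\le C(1+|P|^2)|z-\hat z|$, it replaces $|z-\hat z|$ by $\min\{1,|z-\hat z|\}$, producing the estimate
\begin{align*}
\frac{1}{\lambda_m^2}\int_B|F(a_m,P_m)-F(a_m+\tau_m,P_m)|\,dz
\le\ & C\int_B\Bigl[\frac{r_m}{\lambda_m^2}+\frac{r_m|\mathbf{w}_m|}{\lambda_m}\Bigr]\,dz\\
&{}+C\int_B|D\mathbf{w}_m|^2\,\min\{1,\,r_m+r_m\lambda_m|\mathbf{w}_m|\}\,dz.
\end{align*}
The first integral is $o(1)$ directly from $r_m\le\lambda_m^4$. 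In the second, the $\min$ keeps the integrand dominated pointwise by $|D\mathbf{w}_m|^2\in L^1$, and since $r_m+r_m\lambda_m|\mathbf{w}_m|\to0$ a.e.\ (from $\|r_m\lambda_m\mathbf{w}_m\|_{L^1}\to 0$) one invokes Egoroff: on a set $A_\epsilon$ with $|B-A_\epsilon|<\epsilon$ the $\min$-factor is uniformly small and the contribution is $o(1)\cdot\|D\mathbf{w}_m\|_{L^2}^2$; on $B-A_\epsilon$ the contribution is at most $\int_{B-A_\epsilon}|D\mathbf{w}_m|^2\,dz$, which is then dispatched using the weak $L^2$-limit $D\mathbf{w}$ and letting $\epsilon\to0$. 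This is what is missing from your sketch: without the $\min$ cap there is no integrable majorant, and the pointwise a.e.\ cancellation (which you rightly notice is the only available leverage) cannot be upgraded to convergence of the integrals.

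Finally, a small correction: $r_m\le\lambda_m^4$ is not stronger than the Section~2 relation $r_m^{1/2}\le\lambda_m^2$; squaring the latter gives exactly $r_m\le\lambda_m^4$. There is no extra room coming from the exponent, so the missing ingredient is structural (the $\min$ cap plus Egoroff), not a sharper power of $\lambda_m$.
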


\begin{proof} 1. Using our hypothesis  (H2) we deduce that
\begin{align*}& \int_B|F(a_m,A_m+\lambda_mD\mathbf{w}_m)-
F(a_m+r_mA_mz+\lambda_mr_m\mathbf{w}_m,A_m+\lambda_mD\mathbf{w}_m)|dz\\
&\leq C\int_B(1+|A_m+\lambda_mD\mathbf{w}_m|^2)\,|r_mA_mz+\lambda_mr_m\mathbf{w}_m| \, dz\\
&\leq  C\lambda_m^2\int_B \left[r_m/\lambda_m^2+r_m|\mathbf{w}_m|/\lambda_m\right] \, dz\\
&\qquad + C\lambda_m^2\int_B |D\mathbf{w}_m|^2\min\{1,r_m+r_m\lambda_m|\mathbf{w}_m|\} \, dz \\
=&: R_1+ R_2.
\end{align*}

It is elementary to see that $R_1=o(\lambda_m^2)$.

\medskip
2.  Since $\mathbf{w}_m$ is bounded in $H^1$, we may assume that there exists $\mathbf{w}\in H^1$ such that
$D\mathbf{w}_m\rightharpoonup D\mathbf{w}$ weakly in $L^2$.
Moreover since $\|r_m\lambda_m\mathbf{w}_m\|_{L^1(B)}\rightarrow 0$ we deduce that
$$\lim_{m\rightarrow \infty}r_m+r_m\lambda_m|\mathbf{w}_m|(x)=0\quad\text{ a.e.}$$
Fix $\epsilon>0$. According to Ergoroff's Theorem there exists a measurable set $A_{\epsilon}\subset B$ such that
$$|B-A_{\epsilon}|<\epsilon\quad\text{and}\quad r_m+r_m\lambda_m|\mathbf{w}_m|\rightarrow 0\text{ uniformly in $A_{\epsilon}$.}$$
We hence have
\begin{align*}
R_2&\leq C_2\lambda_m^2\int_{A_{\epsilon}}|D\mathbf{w}_m|^2(r_m+r_m\lambda_m|\mathbf{w}_m|)\, dz+ C_2\lambda_m^2\int_{B-A_{\epsilon}}|D\mathbf{w}_m|^2 \, dz\\
&\leq C_2\lambda_m^2\|D\mathbf{w}_m\|_{L^2{(A_{\epsilon})}}\|\left[r_m+r_m\lambda_m|\mathbf{w}_m|\right]\|_{L^{\infty}{(A_{\epsilon})}}+ C_2\lambda_m^2\int_{B-A_{\epsilon}}|D\mathbf{w}_m|^2 \, dz\\
&=o(\lambda_m^2)+C_2\lambda_m^2\int_{B-A_{\epsilon}}|D\mathbf{w}_m|^2 \, dz\\
&=o(\lambda_m^2)+C_2\lambda_m^2\int_{B-A_{\epsilon}}|D\mathbf{w}|^2 \, dz.
\end{align*}
Letting $\epsilon \to 0$ we immediately obtain the result.
\end{proof}

We now sketch the proof of Theorem \ref{thm.reg.C.1.alpha.with.param}. As already said the proof is almost identical to the one of Theorem \ref{main}.

\begin{proof} We first claim that Theorem \ref{decay}, Lemma \ref{strong.conv} and Lemma \ref{ELsys} still hold.
Define $a_m,A_m,v_m,r_m,\lambda_m,E(x,r)$ as in Section 2. Then (compare with \eqref{de4})
\begin{align*}
&\dashint_B  F(a_m,A_m+\lambda_m D\mathbf{v}_m) + f(a_m+r_mA_m z + \lambda_m r_m \mathbf{v}_m)  \,dz \\
+&\dashint_B   F(a_m+r_mA_m z + \lambda_m r_m \mathbf{v}_m,A_m+\lambda_m D\mathbf{v}_m)-F(a_m,A_m+\lambda_m D\mathbf{v}_m)\,dz  \\
  \le& \dashint_B F(a_m,A_m+\lambda_m D \tilde {\mathbf{v}}_m)  + f(a_m+r_mA_m z + \lambda_m r_m \tilde{ \mathbf{v}}_m) \,dz\\
  +&\dashint_B   F(a_m+r_mA_m z + \lambda_m r_m \tilde {\mathbf{v}}_m,A_m+\lambda_m D\tilde {\mathbf{v}}_m)-F(a_m,A_m+\lambda_m D\tilde {\mathbf{v}}_m)\,dz,
\end{align*}
 provided  $\tilde{\mathbf{v}}_m \in H^1(B)$
 and $\tilde {\mathbf{v}}_m = \mathbf{v}_m$ on $\partial B$.
It follows that $ \mathbf{v}_m$ is a minimizer of
\begin{multline*}
I_r^m[ \mathbf{w}]=\int_{B(0,r)} F_m( D \mathbf{w}) +  \frac{1}{\lambda_m^2}f(a_m+r_mA_m z + \lambda_m r_m  \mathbf{w}) \, dz\\
+\frac{1}{\lambda_m^2}\int_{B(0,r)}  F(a_m+r_mA_m z + \lambda_m r_m \mathbf{w},A_m+\lambda_m D\mathbf{w})-F(a_m,A_m+\lambda_m D\mathbf{w}) \, dz,
\end{multline*}
subject to its boundary conditions, for
\[
F_m(P) :=\frac{F(a_m,A_m+\lambda_m P)-F(a_m,A_m)-\lambda_m DF(a_m,A_m) \cdot P}{\lambda_m^2}
\]
and $r \in (0,1]$. In other words,
\begin{equation*}
I_r^m[ \mathbf{v}_m] \le I_r^m[ \mathbf{w}]
\end{equation*}  for any $ \mathbf{w} \in H^1(B(0,r))$ such that $ \mathbf{w}= \mathbf{v}_m$ in $\partial B(0,r)$.
By Lemma \ref{lem.red.case.with.no.param} we have that
\begin{align*}&\frac{1}{\lambda_m^2}\int_{B(0,r)}  F(a_m+r_mA_m z + \lambda_m r_m \mathbf{w}_m,A_m+\lambda_m D\mathbf{w}_m)-F(a_m,A_m+\lambda_m D\mathbf{w}_m)\, dz\\
&=o(1),
\end{align*}
as long as $\mathbf{w}_m$ is bounded in $H^1(B(0,r);\mathbb{R}^k).$

\medskip
We now proceed exactly as in Section 2 to obtain the claim. We have in effect reduced the problem to the case $F=F(P)$. Finally the end of the proof  is exactly the same as  in Section 2.
\end{proof}
\end{section}

\begin{section}{Improved estimates for convex $F$}
This section is devoted to the study of improved partial regularity for minimizers when $F$ is uniformly convex and only depends on the gradient, meaning that there exists a positive constant $\gamma$ such that
\begin{equation}
R^TD^2F(P)R \ge \gamma |R|^2
\end{equation}
for all matrices $P,R \in \mathbb M^{k \times n}$.

\medskip

{\bf 4.1 Second derivative estimates.} We first show that the second derivatives of our
minimizer exist and are locally square-integrable. This is a standard assertion in the calculus of variations when the singular term $f$ is absent:
see for instance Giaquinta \cite{Gia1} or \cite[Section 8.3.1]{EPDE}.

\begin{thm}
\label{h2reg}
Suppose in addition to the hypotheses of Section 2 that $F$ is uniformly convex. Then
\[
\mathbf{u} \in H^{2}_{loc}(U).
\]
\end{thm}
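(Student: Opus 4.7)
My plan is to control the $L^2$ norms of the difference quotients $D_h^k D\mathbf{u}$ uniformly in small $h$, for each coordinate direction $e_k$, and then pass to the limit $h \to 0$. The main obstacle is that the usual Euler--Lagrange based argument, with test function $D_{-h}^k(\eta^2 D_h^k\mathbf{u})$, is not immediately legitimate here, since $Df(\mathbf{u})$ need not be integrable when $\mathbf{u}$ approaches $\partial \Bbb K$. My strategy is a symmetric comparison designed so that the singular $f$--term cancels out.

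The starting point is a quadratic stability inequality. Using the uniform convexity of $F$ in the form $F((1-t)P+tQ) \le (1-t)F(P) + tF(Q) - \tfrac{\gamma}{2}t(1-t)|P-Q|^2$, together with the convexity of $f$, I insert $\mathbf{w}_t := (1-t)\mathbf{u} + t\mathbf{w}$ into $I$, apply minimality $I[\mathbf{u}] \le I[\mathbf{w}_t]$, and send $t \to 0^+$. For each $V \Subset U$ and every $\mathbf{w} \in H^1(V;\Bbb R^k)$ with $\mathbf{w} = \mathbf{u}$ on $\partial V$, this yields
\[
\tfrac{\gamma}{2}\!\int_V |D\mathbf{u} - D\mathbf{w}|^2\,dx \le I_V[\mathbf{w}] - I_V[\mathbf{u}].
\]
The same inequality holds with $\mathbf{u}$ replaced by the translate $\mathbf{u}^h(x) := \mathbf{u}(x + he_k)$, which is a local minimizer on $V$ for $|h|$ small.

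Next, fix a cutoff $\eta \in C_c^\infty(V)$ with $0 \le \eta \le 1$ and $\eta \equiv 1$ on $V' \Subset V$, and pair the two translates via the symmetric trial functions
\[
\mathbf{w}^+ := (1-\eta)\mathbf{u} + \eta\mathbf{u}^h, \qquad \tilde{\mathbf{w}}^+ := \eta\mathbf{u} + (1-\eta)\mathbf{u}^h,
\]
which satisfy $\mathbf{w}^+ = \mathbf{u}$ and $\tilde{\mathbf{w}}^+ = \mathbf{u}^h$ on $\partial V$. Applying the stability of the previous paragraph to the pairs $(\mathbf{u},\mathbf{w}^+)$ and $(\mathbf{u}^h,\tilde{\mathbf{w}}^+)$ and summing, and observing that $|D\mathbf{u} - D\mathbf{w}^+|^2 = |D\mathbf{u}^h - D\tilde{\mathbf{w}}^+|^2 = |H|^2$ where $H := \eta(D\mathbf{u}^h - D\mathbf{u}) + D\eta \otimes (\mathbf{u}^h - \mathbf{u})$, I obtain
\[
\gamma\!\int_V |H|^2\,dx \le I_V[\mathbf{w}^+] + I_V[\tilde{\mathbf{w}}^+] - I_V[\mathbf{u}] - I_V[\mathbf{u}^h].
\]
The crucial cancellation is that $\mathbf{w}^+$ and $\tilde{\mathbf{w}}^+$ are convex combinations of $\mathbf{u}$ and $\mathbf{u}^h$ with complementary coefficients, so convexity of $f$ gives pointwise $f(\mathbf{w}^+) + f(\tilde{\mathbf{w}}^+) \le f(\mathbf{u}) + f(\mathbf{u}^h)$, making the singular term drop out of the right-hand side. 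For the remaining $F$--contribution I decompose $D\mathbf{w}^+ = A + E$ and $D\tilde{\mathbf{w}}^+ = B - E$ with $A,B$ the analogous convex combinations of $D\mathbf{u}$ and $D\mathbf{u}^h$ (so $F(A)+F(B) \le F(D\mathbf{u})+F(D\mathbf{u}^h)$ by convexity) and $E = D\eta \otimes (\mathbf{u}^h - \mathbf{u})$; a Taylor expansion in $\pm E$, using $|D^2F| \le C$ and the Lipschitz bound $|DF(A) - DF(B)| \le C|A-B| = O(h|D_h^k D\mathbf{u}|)$, bounds the $F$-difference pointwise by $Ch^2\bigl(|D\eta||D_h^k D\mathbf{u}||D_h^k\mathbf{u}| + |D\eta|^2|D_h^k\mathbf{u}|^2\bigr)$.

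Dividing by $h^2$, using $|H|^2 \ge \tfrac{1}{2}h^2 \eta^2|D_h^k D\mathbf{u}|^2 - h^2|D\eta|^2|D_h^k\mathbf{u}|^2$, and absorbing the cross term with Young's inequality, I arrive at a Caccioppoli-type bound which, when $\eta$ is a standard cutoff between concentric balls $B_s \Subset B_R \Subset U$ with $|D\eta| \le C/(R-s)$, takes the form
\[
\int_{B_s}|D_h^k D\mathbf{u}|^2\,dx \le \theta\!\int_{B_R}|D_h^k D\mathbf{u}|^2\,dx + \frac{C}{(R-s)^2}\!\int_{B_R}|D_h^k\mathbf{u}|^2\,dx
\]
with some $\theta < 1$. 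The Giaquinta--Giusti iteration lemma absorbs the first right-hand term and, combined with the standard estimate $\|D_h^k\mathbf{u}\|_{L^2(V)} \le \|D\mathbf{u}\|_{L^2(U)}$, yields a bound on $\int_{B_{R/2}}|D_h^k D\mathbf{u}|^2$ independent of $h$. Passing to $h \to 0$ and varying $k$ gives $D^2\mathbf{u} \in L^2_{loc}(U)$, i.e.\ $\mathbf{u} \in H^2_{loc}(U)$. The hard part is the symmetric pairing at the heart of the argument: it is precisely engineered so that the singular $f$--term cancels, without which the construction would leave an uncontrolled $O(h)\|Df(\mathbf{u})\|_{L^1}$ residue and the proof would collapse.
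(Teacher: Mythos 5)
Your proof is correct, but it takes a genuinely different route from the paper's. The paper uses the classical second-difference test function $\mathbf{v}=D_k^{-h}(\xi^2 D_k^h\mathbf{u})$ and observes that for $t>0$ small (depending on $h$) the perturbation $\mathbf{u}+t\mathbf{v}$ is, at each $x$, a convex combination of $\mathbf{u}(x)$, $\mathbf{u}(x+he_k)$ and $\mathbf{u}(x-he_k)$ with nonnegative weights; convexity of $f$ and a discrete change of variable then give $\int_U f(\mathbf{u}+t\mathbf{v})\,dx\le\int_U f(\mathbf{u})\,dx$, so after dividing by $t$ and sending $t\to0^+$ the singular term is discarded by sign and one is left with the one-sided linearized inequality $0\le\int_U DF(D\mathbf{u}):D\mathbf{v}\,dx$. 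A second discrete integration by parts together with uniform convexity of $F$ then yields the $H^2$ bound directly, with no iteration lemma, because the product rule applied to $\xi^2$ automatically places a factor of $\xi$ in front of the cross term, allowing immediate absorption by Young's inequality. Your symmetric comparison between $\mathbf{u}$ and its translate $\mathbf{u}^h$ — both local minimizers of the autonomous functional — combined with the stability inequality from uniform convexity and the complementary interpolants $\mathbf{w}^+,\tilde{\mathbf{w}}^+$ makes $f(\mathbf{w}^+)+f(\tilde{\mathbf{w}}^+)\le f(\mathbf{u})+f(\mathbf{u}^h)$ cancel the singular term outright, with no linearization near the singularity at all. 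This is structurally cleaner, but the price is that your error term $E=D\eta\otimes(\mathbf{u}^h-\mathbf{u})$ carries only a factor of $|D\eta|$ rather than $\eta|D\eta|$, so the cross term cannot be absorbed directly and you need the Caccioppoli estimate with $\theta<1$ plus the Giaquinta--Giusti iteration lemma. Both arguments use only the convexity of $f$ and the translation invariance of $F$ and $f$, and both carefully sidestep any appeal to $Df(\mathbf{u})$, which — as the paper notes after its proof — is not known to be integrable. One small point worth stating explicitly in your write-up: that $\mathbf{u}^h$ is a local minimizer of $I_V$ for $|h|$ small does follow from translation invariance and $V+he_k\subset U$, but this deserves a sentence.
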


\begin{proof}
1. Fix any open set $V \subset\subset U$ and then select a cutoff function $\xi$ satisfying
$$
\begin{cases}
0 \le \xi \le 1, \ \xi=1\,\mbox{ on }\,V \\  \xi=0\,\mbox{ near } \partial U.
\end{cases}
$$
For $|h|>0$ small, let
$$
\mathbf{v}=D_k^{-h}(\xi^2 D_k^h \mathbf{u}),
$$
where $D_k^h \mathbf u$ denotes the difference quotient
$$
D_k^h \mathbf{u}(x) = \frac{\mathbf{u}(x+he_k)-\mathbf{u}(x)}{h} \quad (h \in \mathbb R,\,h \ne 0)
$$ and $e_k$ is the unit vector in the $x_k$  direction.
The explicit form of $\mathbf v$ is
\[
\frac{\xi^2(x) \mathbf{u}(x+he_k)+\xi^2(x-he_k) \mathbf{u}(x-he_k)-(\xi^2(x) +\xi^2(x-he_k)) \mathbf{u}(x)}{h^2}.
\]

\medskip

2. There exists an open set  $W$ such that $V\subset \subset   W  \subset \subset U$ and $\mbox{spt} (\mathbf v) \subset  W$ for $|h|>0$ small enough.
For  small $t= t(h) >0$, we have
$$
 1-t\frac{\xi^2(x)}{h^2}- t\frac{\xi^2(x-he_k)}{h^2} \ge 0.
$$
The convexity of $f$ consequently implies that
\begin{align}
\label{convexity.uplustvee}
\qquad f(\mathbf{u}+t\mathbf{v}) & =f \left( t\frac{\xi^2(x)}{h^2} \mathbf{u}(x+he_k) + t\frac{\xi^2(x-he_k)}{h^2} \mathbf{u}(x-he_k) \right. \notag \\
&\qquad \qquad \qquad \qquad + \left.   \left(1-t\frac{\xi^2(x)}{h^2}- t\frac{\xi^2(x-he_k)}{h^2}\right)\mathbf{u}(x) \right) \notag \\
& \le  t\frac{\xi^2(x)}{h^2} f(\mathbf{u}(x+he_k))+t\frac{\xi^2(x-he_k)}{h^2} f(\mathbf{u}(x-he_k))
 \\
&\qquad \qquad \qquad \qquad + \left(1-t\frac{\xi^2(x)}{h^2}- t\frac{\xi^2(x-he_k)}{h^2}\right)f(\mathbf{u}(x)). \notag
\end{align}

\medskip
3. We note next that
\begin{equation}
\label{sign.integrals.with.f}
 \int_{U} f(\mathbf{u}  +t\mathbf{v}) \, dx \le \int_U f(\mathbf{u})  \,dx .
\end{equation}
To confirm this,  observe from \eqref{convexity.uplustvee} that
\begin{align*}
   \int_{U} \frac{ f(\mathbf{u}  +t\mathbf{v})-f(\mathbf{u})}{t} \,dx & \le   \int_{U}
  \frac{\xi^2(x)}{h^2}( f(\mathbf{u}(x+he_k))-f(\mathbf{u}(x))) \, dx \\
  & \quad  +    \int_{U} \frac{\xi^2(x-he_k)}{h^2} (f(\mathbf{u}(x-he_k))- f(\mathbf{u}(x))) \, dx \\
  &=  \int_{U}
  \frac{\xi^2(x)}{h^2}( f(\mathbf{u}(x+he_k))-f(\mathbf{u}(x))) \, dx \\
  &\quad +    \int_{U} \frac{\xi^2(y)}{h^2} (f(\mathbf{u}(y))- f(\mathbf{u}(y + he_k))) \, dy \\
  &=0.
\end{align*}

\medskip

4.
Since $\mathbf{u}$ is a minimizer, we have for small $t>0$ that
\begin{align*}
0  &\le \int_U F(D\mathbf{u} +t D\mathbf{v}) - F(D\mathbf{u}) \,dx + \int_U f(\mathbf{u} +t \mathbf{v}) - f(\mathbf{u}) \,dx \\
&\le \int_U F(D\mathbf{u} +t D\mathbf{v}) - F(D\mathbf{u}) \,dx,
\end{align*}
according to \eqref{sign.integrals.with.f}. Divide by $t$ and send $t \to 0^+$:
\begin{equation}
\label{bett3}
0 \le   \int_{U}  DF(D\mathbf{u}) : D\mathbf{v}  \,dx.
\end{equation}

Recalling  the definition of $\mathbf{v}$, we see that
\begin{align*}
0 &\le   \int_{U} DF(D\mathbf{u}) : D\mathbf{v} \,dx
 =   \int_{U} F_{p_i^\alpha}(D\mathbf{u}) [D_k^{-h}(\xi^2 D_k^h u^\alpha)]_{x_i}\,dx\\
&= -  \int_{U}D_k^h(F_{p_i^\alpha}(D\mathbf{u})) [(\xi^2 D_k^h u^\alpha)]_{x_i}  \,dx.
\end{align*}
Following now a standard argument (as for instance in  \cite[Section 8.3.1]{EPDE}),  we use the uniform convexity of $F$
to bound the term
$$
  \int_{V} |D^h D{\bold u}|^2 \, dx
$$
independently of $h$. This implies that $D{\bold u}$ belongs to $H^1_{loc}$.
\end{proof}

Note that in this  proof we carefully avoided confronting the possibly very singular term  $Df(\mathbf{u})$. In particular,  we do not know that $Df(\mathbf{u})$ is integrable.

 \medskip

{\bf 4.2   Rate of blow-up of \boldmath$f$. \unboldmath}
If we know more about  the speed of blow-up of $f$ near the boundary of $\Bbb \mathbb{K}$, then  Theorem \ref{main} can be improved:

\begin{thm}\label{hausdorff1}
Assume that  $F$ is uniformly convex and there exists a constant $\gamma>0$ such that
\begin{equation}\label{fastdiv}
f(z) \ge \frac{\gamma}{{\rm dist}(z,\partial \mathbb{K})^2}, \qquad (z \in \mathbb{K}).
\end{equation}
Then
\begin{equation}
\mathcal{H}^{n-2 + \epsilon}(U- U_0)=0
\end{equation}
for each $\epsilon >0$, $\mathcal{H}^s$ denoting $s$-dimensional Hausdorff measure.
\end{thm}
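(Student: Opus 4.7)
The plan is to combine the $H^2$-regularity of Theorem~\ref{h2reg} with a density estimate for Radon measures. Since $\mathbf{u}\in H^2_{\text{loc}}(U)$, the nonnegative function $g:=|D^2\mathbf{u}|^2+f(\mathbf{u})$ is locally integrable on $U$. Introduce the candidate singular set
\[
\Sigma := \Big\{\, x\in U \,:\, \limsup_{r\to 0^+}\, r^{2-n}\!\int_{B(x,r)} g\,dy \, > \, 0 \,\Big\},
\]
and let $N$ denote the set of points at which $\mathbf{u}$ or $D\mathbf{u}$ fail to be Lebesgue points of the precise Sobolev representative. The classical upper-density estimate for finite Radon measures (see e.g.~\cite{EG1}) gives $\mathcal{H}^{n-2+\epsilon}(\Sigma)=0$ for every $\epsilon>0$. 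Because $\mathbf{u}\in H^2_{\text{loc}}$ and $D\mathbf{u}\in H^1_{\text{loc}}$, the set $N$ has $W^{1,2}$-capacity zero, hence $\mathcal{H}^{n-2+\epsilon}(N)=0$ as well. It therefore suffices to prove the inclusion $U-U_0\subseteq \Sigma\cup N$.

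Fix $x\in U-(\Sigma\cup N)$. The plan is to verify the hypotheses of Theorem~\ref{decay} (equivalently, of Lemma~\ref{add}) at $x$ for some $L>0$ and all sufficiently small $r$. Since $\mathbf{u}$ takes values in the bounded set $\mathbb{K}$, $\mathbf{u}\in L^\infty$ and $|(\mathbf{u})_{x,r}|$ is uniformly bounded. Since $x\notin N$, $(D\mathbf{u})_{x,r}\to D\mathbf{u}(x)$ is bounded. The Poincar\'e inequality applied to $D\mathbf{u}\in H^1$ yields
\[
E(x,r)\;\le\; r^{1/2}+C\, r^{2-n}\!\int_{B(x,r)} |D^2\mathbf{u}|^2\,dy\, \longrightarrow 0 \qquad (r\to 0),
\]
because $x\notin\Sigma$. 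The remaining condition, boundedness of $|f((\mathbf{u})_{x,r})|$, will follow from the claim that $\mathbf{u}(x)\in\mathrm{int}(\mathbb{K})$: indeed $(\mathbf{u})_{x,r}\to \mathbf{u}(x)$ then lies in $\mathbb{K}_\delta$ for some $\delta>0$ and all small $r$, and continuity of $f$ on $\mathbb{K}$ gives $f((\mathbf{u})_{x,r})\to f(\mathbf{u}(x))<\infty$.

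To establish this claim, assume for contradiction that $\mathbf{u}(x)\in\partial\mathbb{K}$. Then $\mathrm{dist}(\mathbf{u}(y),\partial\mathbb{K})\le |\mathbf{u}(y)-\mathbf{u}(x)|$ for a.e.\ $y$, and hypothesis~\eqref{fastdiv} yields $f(\mathbf{u}(y))\ge \gamma/|\mathbf{u}(y)-\mathbf{u}(x)|^2$. The Cauchy-Schwarz inequality applied to the pair $|\mathbf{u}-\mathbf{u}(x)|$ and $1/|\mathbf{u}-\mathbf{u}(x)|$ gives
\[
|B(x,r)|^2 \;\le\; \Big(\int_{B(x,r)} |\mathbf{u}-\mathbf{u}(x)|^2\,dy\Big) \Big(\int_{B(x,r)} \frac{dy}{|\mathbf{u}-\mathbf{u}(x)|^2}\Big),
\]
and so
\[
r^{2-n}\!\int_{B(x,r)} f(\mathbf{u})\,dy \;\ge\; c_0\, \frac{r^2}{\dashint_{B(x,r)} |\mathbf{u}-\mathbf{u}(x)|^2\,dy}
\]
for a constant $c_0=c_0(\gamma,n)>0$. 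A Poincar\'e/Taylor-type estimate based on $\mathbf{u}\in H^2_{\text{loc}}$ and the Lebesgue-point property of $D\mathbf{u}$ at $x$ supplies the auxiliary bound $\dashint_{B(x,r)} |\mathbf{u}-\mathbf{u}(x)|^2\,dy \le C(x)\, r^2$ for all small $r$. Combining the two inequalities produces $r^{2-n}\!\int_{B(x,r)} f(\mathbf{u})\,dy\ge c>0$ for small $r$, contradicting $x\notin\Sigma$. Hence $\mathbf{u}(x)\in\mathrm{int}(\mathbb{K})$, and Theorem~\ref{decay} (iterated via Lemma~\ref{add}) gives $x\in U_0$, completing the proof.

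The main obstacle is this last step, where the Cauchy-Schwarz argument converts the $L^1$-integrability of $f(\mathbf{u})$---together with the sharp blow-up $f\ge\gamma/\mathrm{dist}^2$---into the pointwise conclusion $\mathbf{u}(x)\in\mathrm{int}(\mathbb{K})$. The auxiliary quadratic Poincar\'e bound on $\dashint |\mathbf{u}-\mathbf{u}(x)|^2$ requires a careful use of the $H^2$-regularity and the precise-representative theory for Sobolev functions; everything else is routine once Theorem~\ref{h2reg} is in hand.
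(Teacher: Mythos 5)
Your proposal is correct and follows the same overall decomposition strategy as the paper: use the $H^2_{loc}$ bound from Theorem~\ref{h2reg}, split the bad set into a part controlled by the upper-density theorem for $L^1$ functions, a part controlled by Sobolev capacity, and the residual set $\{f(\mathbf u)=\infty\}$, and then show this last set sits inside the density-bad set for $f(\mathbf u)$ using hypothesis~\eqref{fastdiv}. Your set $\Sigma$ merges the paper's $U-U_1$ and $\Lambda$; your $N$ is the paper's $U-U_2$; the logic is otherwise the same.

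The one genuine difference is the mechanism that converts $\mathbf u(x)\in\partial\mathbb K$ into a positive density for $f(\mathbf u)$ at $x$. The paper first establishes $|(\mathbf u)_{x,s}-\mathbf u(x)|\le Cs$ via the telescoping/Poincar\'e estimate, applies~\eqref{fastdiv} to the single point $(\mathbf u)_{x,s}$, and then lowers the average $\dashint f(\mathbf u)$ by \emph{Jensen's inequality}, which uses the convexity of $f$. You instead apply~\eqref{fastdiv} pointwise to get $f(\mathbf u(y))\ge\gamma/|\mathbf u(y)-\mathbf u(x)|^2$, and use Cauchy--Schwarz to lower $\int 1/|\mathbf u-\mathbf u(x)|^2$ by the reciprocal of $\dashint|\mathbf u-\mathbf u(x)|^2$. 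Both routes ultimately rest on the same quadratic decay bound $\dashint_{B(x,r)}|\mathbf u-\mathbf u(x)|^2\le C(x)r^2$, obtained from $H^2$-regularity, a telescoping estimate for $(\mathbf u)_{x,r}\to\mathbf u(x)$, and the Lebesgue-point property of $D\mathbf u$ at $x$. Your Cauchy--Schwarz variant has the small advantage of not invoking convexity of $f$ in this particular step, though that buys nothing here since the rest of the paper requires convexity anyway. Two small remarks: the density-estimate citation should be to \cite{EG2} (the measure theory book), not \cite{EG1}; and it would be worth spelling out the quadratic Poincar\'e/Taylor bound $\dashint|\mathbf u-\mathbf u(x)|^2\le Cr^2$ explicitly, since it combines the telescoping estimate for $|(\mathbf u)_{x,r}-\mathbf u(x)|$ with the second-order Poincar\'e inequality and is the heart of the contradiction.
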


\begin{proof}
According to Theorem \ref{h2reg}, $\mathbf{u} \in H^2_{loc}(U)$.

\medskip

1. Set
$$
 U_1 :=\left \{x \in U|\, \lim_{r\to 0}\dashint_{B(x,r)} |D\mathbf{u}-(D\mathbf{u})_{x,r}|^2\,dy=0\right\}.
$$
Using Poincar\'e's inequality and since $D^2\mathbf{u}\in L^2_{loc}(U)$ we have (cf. for instance \cite[page 77]{EG2})
\begin{align*}\mathcal{H}^{n-2}(U-U_1)=&\mathcal{H}^{n-2}\left(\left \{x \in U|\,\limsup_{r\to 0}\dashint_{B(x,r)} |D\mathbf{u}-(D\mathbf{u})_{x,r}|^2\,dy>0\right\}\right)\\
\le &\mathcal{H}^{n-2}\left(\left \{x \in U|\,\limsup_{r\to 0}\frac{1}{r^{n-2}}\int_{B(x,r)} |D^2\mathbf{u}|^2\,dy>0\right\}\right)=0.\end{align*}

\medskip

2. Set $$
 U_2 :=\left \{x \in U|\,\lim_{r\to 0} (\mathbf{u})_{x,r}=\mathbf{u}(x),\,\lim_{r\to 0} (D\mathbf{u})_{x,r}=D\mathbf{u}(x),\, |\mathbf{u}(x)|<\infty,\,|D\mathbf{u}(x)|<\infty\right\}.
$$
We claim that $\mathcal{H}^{n-2+\epsilon}(U- U_2)=0$ for every $\epsilon>0$.
This follows since if a function  belongs to $H^1$, then the limit of its averages over balls converges to a finite limit (in fact to the function itself) except possibly for a set $E$ with capacity $\text {Cap}_2(E) = 0$ (\cite[page 160]{EG2}), and therefore
$\mathcal{H}^{n-2 + \epsilon}(E) = 0$ (\cite[page 156]{EG2}).

\medskip

3. Let
$$
\Lambda =\left\{x\in U|\,\limsup_{r\to 0} \frac{1}{r^{n-2}}\int_{B(x,r)} f(\mathbf u)\,dy>0\right\}.
$$
Since $f (\mathbf  u ) \in L^1(U)$, we have as before that $\mathcal{H}^{n-2}(\Lambda)=0$:
see for instance  \cite[page 77]{EG2}.
\medskip

3. Define next
$$
U_3 :=\{x\in U_1\cap U_2|\, f(\mathbf{u}(x))=\infty\}.
$$

We claim that
\begin{equation}
\label{u2.and.lambda}
U_3 \subseteq \Lambda.
\end{equation}
To see this, take any $x\in U_3\subset U_1\cap U_2$. By the definition of $U_1$ and $U_2$, there exists $r, L>0$ such that
\begin{equation}
\label{u1.prop}
|(D\mathbf u)_{x,s}| \le L, \
\dashint_{B(x,s)} |D\mathbf u - (D\mathbf u)_{x,s}|^2\,dy \le L
 \ \mbox{for all } \, s\le r.
\end{equation}
Then (cf. Lemma \ref{add})
$$
|(\mathbf u)_{x, \tau^{l+1}s} - (\mathbf u)_{x,\tau^l s}| \le C \tau^l s
$$
for $l = 0,1, \dots$; and hence
$$
|(\mathbf{u})_{x,s} - \mathbf u(x)| \le \sum_{l=0}^\infty |(\mathbf u)_{x, \tau^{l+1}s} - (\mathbf u)_{x,\tau^l s}|  \le  \frac{Cs}{1-\tau}.
$$
As $u(x) \in \partial \mathbb{K}$,  the assumption \eqref{fastdiv} implies
\[
f((\mathbf{u})_{x,s}) \ge \frac{\gamma}{{\rm dist}((\mathbf{u})_{x,s},\partial \mathbb{K})^2}  \ge \frac{\gamma}{|(\mathbf{u})_{x,s} - \mathbf u(x)|^2} \ge \frac{C}{s^2}.
\]
Jensen's inequality now implies
$$
\dashint_{B(x,s)} f(\mathbf u)\,dy \ge f((\mathbf{u})_{x,s}) \ge \frac{C}{s^2}.
$$
Thus for $s<r$ we have
$$
\frac{1}{s^{n-2}} \int_{B(x,s)} f(\mathbf u)\,dy \ge C>0;
$$
and consequently $x \in \Lambda$.

\medskip

4. Observe  finally that  $U_0=(U_1\cap U_2)- U_3$ and $U- U_0 = (U- U_1) \cup (U-U_2)\cup U_3$. Hence $\mathcal{H}^{n-2 + \epsilon}(U- U_0)=0$ for each $\epsilon>0$.
\end{proof}

\medskip

As we will discuss in Section 4, Ball-Majumdar \cite{BM1} have introduced  certain liquid crystal models for which $f$ exhibits a logarithmic divergence near $\partial \Bbb K$: this is much weaker than \eqref{fastdiv}. We propose therefore to
extend the previous proof to handle this case, and for motivation look at the following model
problem:

\medskip

{\bf Example.}
Assume $\Bbb K=B(0,1)$ and there exists $r \in (1/2,1)$ such that
$$
f(z)=-\log(1-|z|), \qquad \mbox{for }\, r<|z|<1.
$$
Then
$$
f_{z^\alpha}(z)=\frac{z^\alpha}{|z|(1-|z|)},
$$
and
$$
f_{z^\alpha z^\beta}(z)=\frac{\delta_{\alpha \beta}}{|z|(1-|z|)}
+\frac{z^\alpha z^\beta}{|z|^2 (1-|z|)^2} \left(2-\frac{1}{|z|}\right).
$$
Therefore
\begin{equation}\notag
f_{z^\alpha z^\beta}(z) y^\alpha y^\beta \ge
 \gamma\, |Df(z)\cdot y|^2 \qquad \mbox{for } \,r<|z|<1,
\end{equation}
where $\gamma :=2-\frac{1}{r}$.
 \qed

\medskip

Motivated by this example, we introduce the condition that
\begin{equation}
\label{growth.log}
C|y|^2+ y^T D^2f(z) y
\ge \gamma\, |Df(z)\cdot y|^2,\qquad  (z \in \Bbb K,\,y\in \mathbb R^k)
\end{equation}
for constants $C, \gamma >0$.

\begin{thm}
\label{hausdorff2}
{\rm (i)} Assume that $F$ is uniformly convex and $f$ satisfies \eqref{growth.log}, then
\[f(\mathbf u) \in H^1_{loc}(U).
\]

{\rm (ii)} Therefore
\[
\mathcal{H}^{n-2+\epsilon}(U- U_0)=0
\]
for each $\epsilon>0.$
\end{thm}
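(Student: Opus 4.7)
The strategy is to derive a Caccioppoli-type inequality for $f(\mathbf{u})$. Since $\mathbf{u}\in H^2_{loc}(U)$ by Theorem~\ref{h2reg}, the chain rule formally yields $\partial_i f(\mathbf{u}) = Df(\mathbf{u})\cdot \partial_i \mathbf{u}$, and applying \eqref{growth.log} coordinate-wise with $y=\partial_i\mathbf{u}$, then summing over $i$, produces the pointwise bound
\begin{equation*}
\gamma\,|D(f(\mathbf{u}))|^2 \le C|D\mathbf{u}|^2 + \sum_i (\partial_i\mathbf{u})^T D^2 f(\mathbf{u})\,\partial_i\mathbf{u}.
\end{equation*}
Hence it suffices to control the last term against $\phi^2$ for any compactly supported cutoff $\phi$. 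I would obtain this by differentiating the (formal) Euler--Lagrange system $-\operatorname{div}(DF(D\mathbf{u}))+Df(\mathbf{u})=0$ in the $x_k$ direction, testing against $\phi^2\partial_k\mathbf{u}$, summing over $k$, and integrating by parts. The uniform convexity of $F$, the bound $|D^2 F|\le C$ from (H2), and Cauchy--Schwarz then lead to
\begin{equation*}
\tfrac{\gamma}{2}\int\phi^2|D^2\mathbf{u}|^2\,dx + \int\phi^2 \sum_k (\partial_k\mathbf{u})^T D^2 f(\mathbf{u})\,\partial_k\mathbf{u}\,dx \le C \int |D\phi|^2|D\mathbf{u}|^2\,dx,
\end{equation*}
which combined with the previous pointwise inequality gives $\int_V |D(f(\mathbf{u}))|^2\,dx<\infty$ for every $V\subset\subset U$, hence $f(\mathbf{u})\in H^1_{loc}(U)$.

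\textbf{The main obstacle} is the rigour of the Euler--Lagrange manipulation, since a priori $Df(\mathbf{u})$ need not be locally integrable, so the equation cannot be tested or differentiated directly. To circumvent this, I would approximate $f$ by a family of smooth globally defined convex functions $f^{\delta}:\mathbb{R}^k\to[0,\infty)$ coinciding with $f$ on $\mathbb{K}_{2\delta}$ and inheriting \eqref{growth.log} with $\delta$-independent constants. Minimising the corresponding functional $I^\delta$ produces $\mathbf{u}^\delta\in H^2_{loc}$ (by Theorem~\ref{h2reg}) that satisfies the classical Euler--Lagrange system, so the computation above is rigorous for $\mathbf{u}^\delta$ and yields a bound on $\|f^\delta(\mathbf{u}^\delta)\|_{H^1_{loc}}$ uniform in $\delta$. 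Passing to a subsequential limit produces a minimiser $\mathbf{u}$ of the original functional with $f(\mathbf{u})\in H^1_{loc}(U)$. The technical difficulty lies in constructing the $f^\delta$ so that both convexity and \eqref{growth.log} are preserved uniformly.

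\textbf{Plan for part (ii).} This follows Theorem~\ref{hausdorff1} almost verbatim, but with the quantitative growth hypothesis \eqref{fastdiv} replaced by the new information $f(\mathbf{u})\in H^1_{loc}$ from part~(i). Define $U_1, U_2, U_3$ exactly as in Theorem~\ref{hausdorff1}; the bounds $\mathcal{H}^{n-2}(U\setminus U_1)=0$ (from $\mathbf{u}\in H^2_{loc}$) and $\mathcal{H}^{n-2+\epsilon}(U\setminus U_2)=0$ (from capacity theory for $H^1$ functions) are obtained identically. For $x\in U_3$, Jensen's inequality together with \eqref{f.hypotheses.blowup} gives
\begin{equation*}
(f(\mathbf{u}))_{x,r}\ge f((\mathbf{u})_{x,r})\to\infty \quad \text{as } r\to 0,
\end{equation*}
since $(\mathbf{u})_{x,r}\to\mathbf{u}(x)\in\partial\mathbb{K}$. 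Hence $U_3$ is contained in the set where the precise representative of $f(\mathbf{u})$ has infinite Lebesgue limit. By part~(i) this set has $H^1$-capacity zero, hence $\mathcal{H}^{n-2+\epsilon}$-measure zero for every $\epsilon>0$. Combining the three estimates yields $\mathcal{H}^{n-2+\epsilon}(U\setminus U_0)=0$.
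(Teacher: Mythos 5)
Your overall strategy for part~(i) --- pass to smooth convex approximants $f^\delta$, derive a Caccioppoli-type estimate on the second-order term $\int \phi^2 (D\mathbf{u}^\delta)^T D^2 f^\delta(\mathbf{u}^\delta)\,D\mathbf{u}^\delta$, then invoke \eqref{growth.log} --- is the right starting point, but the specific approximation you propose is impossible, and you flag exactly the spot where it breaks. You ask for $f^\delta:\mathbb{R}^k\to[0,\infty)$ everywhere finite, convex, equal to $f$ on $\mathbb{K}_{2\delta}$, \emph{and} satisfying \eqref{growth.log} with $\delta$-independent constants $C',\gamma'$. This cannot happen: already in one variable, \eqref{growth.log} reads $C' + g'(t)\ge \gamma' g(t)^2$ for $g=(f^\delta)'$; once $g(t_0)>\sqrt{C'/\gamma'}$, the differential inequality $g'\ge \gamma' g^2 - C'$ forces $g$ to blow up in finite distance. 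Since $f^\delta=f$ on $\mathbb{K}_{2\delta}$ and $|Df|\to\infty$ near $\partial\mathbb{K}$, for small $\delta$ you necessarily have $|Df^\delta|>\sqrt{C'/\gamma'}$ on $\partial\mathbb{K}_{2\delta}$, so any such $f^\delta$ must have a singularity at finite distance --- contradicting global finiteness. The same obstruction appears in higher dimensions by restricting to a line. So the ``technical difficulty'' you name is not a gap to be filled but a wall.

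The paper avoids this by never asking the approximants to satisfy \eqref{growth.log}. It only requires $f^\eta$ to be smooth, convex, nonnegative, with $0\le f^\eta\le f$ and $f^\eta\equiv f$ on $\mathbb{K}_\eta$. From minimality of $\mathbf{u}^\eta$ and convexity alone (via difference quotients and Fatou in $h$), one gets the uniform estimate $\int_V f^\eta_{z^\alpha z^\beta}(\mathbf{u}^\eta)\,u^{\eta,\alpha}_{x_k}u^{\eta,\beta}_{x_k}\,dx\le C$. One then passes $\eta\to 0$ by Fatou \emph{on the sets} $A_\delta=\{\mathbf{u}\in\mathbb{K}_\delta\}$, where $f^\eta\to f$ locally, and sends $\delta\to 0$; only at this stage, after the limit has been taken, is \eqref{growth.log} applied --- and only to the original $f$ --- to conclude $Df(\mathbf{u})\,D\mathbf{u}\in L^2_{loc}$. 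A further nontrivial step you gloss over is justifying that $Df(\mathbf{u})\,D\mathbf{u}$ is the distributional gradient of $f(\mathbf{u})$: since $Df(\mathbf{u})$ need not be locally integrable a priori, the paper runs a separate truncation argument using the nearest-point projections $\Phi_\eta$ onto the convex sublevel sets $\{f<\eta\}$, together with dominated convergence, before Poincar\'e closes the $H^1_{loc}$ estimate. Your part~(ii), by contrast, is correct and matches the paper: once $f(\mathbf{u})\in H^1_{loc}$, Jensen's inequality puts $U_3$ inside the exceptional set where the Lebesgue averages of an $H^1$ function fail to have a finite limit, which has zero $2$-capacity and hence zero $\mathcal{H}^{n-2+\epsilon}$ measure.
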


\begin{proof} 1. Let $f^\eta$
be  smooth, convex and everywhere defined on $\mathbb R^k$, such that
$$
0 \le f^\eta \le f, \quad  f^\eta \equiv f \text{ on $\Bbb K_\eta$.}
$$
Let $\mathbf{u}^\eta$ denote the unique minimizer of
$$
I^\eta[\mathbf{v}]=\int_U F( D\mathbf{v})+  f^\eta(\mathbf{v})\, dx
$$
over the admissible class of functions $\mathcal A$. The functions $\{ \mathbf{u}^\eta \}_{\eta >0}$ are uniformly bounded in $H^2(V)$ for each compactly contained subregion
$V \subset \subset U$ (see the proof of Theorem \ref{h2reg}). As a consequence we claim that
\begin{equation}
\label{convergence.u.epsilon}
\mathbf{u}^\eta \to \mathbf{u} \quad \text{in $H^1_{loc}$}
\end{equation}
as $\eta \to 0$.
We already know that
$$ \mathbf{u}^\eta \to \mathbf{w} \quad \text{in $H^1_{loc}$}$$ for some $\mathbf{w}\in H^1_{loc}.$
It remains to show that $\mathbf{w}=\mathbf{u}.$
Note that
\begin{equation}\label{min.u.eta}\int_U f^\eta(\mathbf{u}^\eta)+F(D\mathbf{u}^\eta)   \,dx   \le \int_U f^\eta(\mathbf{u})+F(D\mathbf{u})   \,dx \le \int_U f(\mathbf{u})+F(D\mathbf{u})   \,dx.\end{equation}
We can assume that (up to a subsequence)
\begin{equation}\label{conv.point.u.eta}
\mathbf u^\eta \to  \mathbf w,\, D\mathbf u^\eta \to  D\mathbf w \ \ \text{a.e. in U.}
\end{equation}
Using the properties of $f^{\eta}$ and $F$ we hence deduce
$$f^{\eta}(\mathbf{u}^\eta(x)) \to f(\mathbf{w}(x)), F(D\mathbf{u}^\eta(x)) \to F(D\mathbf{w}(x))\ \ \text{a.e. in U.}$$
Combining \eqref{min.u.eta}, \eqref{conv.point.u.eta}  and  Fatou's lemma we have
$$ \int_U f(\mathbf{w})+F(D\mathbf{w})   \,dx \le \int_U f(\mathbf{u})+F(D\mathbf{u})   \,dx.$$
Hence by uniqueness of the minimizer of $I$ we obtain $\mathbf{u}=\mathbf{w}.$

\medskip
2. Since $\mathbf{u}^\eta$ is a minimizer of $I^{\eta}$ we have
$$
 \int_U f^\eta(\mathbf{u}^\eta) - f^\eta(\mathbf{u}^\eta +t \mathbf{v}^\eta)   \,dx   \le \int_U F(D\mathbf{u}^\eta +t D\mathbf{v}^\eta) - F(D\mathbf{u}^\eta) \,dx
$$
where
$$
\mathbf{v}^\eta=D_k^{-h}(\xi^2 D_k^h \mathbf{u}^\eta)
$$
and the function $\xi$ is as  in the proof of Theorem \ref{h2reg}.
Dividing by $t>0$ and sending $t \to 0$, we deduce
$$
- \int_U Df^\eta(\mathbf{u}^\eta)\cdot  \mathbf{v}^\eta   \,dx   \le \int_U DF(D\mathbf{u}^\eta): D\mathbf{v}^\eta \,dx \le C,
$$
owing to the uniform $H^2_{loc}$ estimates on $\mathbf{u}^\eta$. Rewriting the left hand side, we obtain the bound
$$
\int_{U} \xi^2 D_k^h(f^\eta_{z^\alpha} (\mathbf u^\eta)) D_k^h u^{\eta,\alpha}\,dx \le C.
$$

 Since $f^\eta$ is convex, the integrand in the last expression is pointwise nonnegative. Invoking  Fatou's Lemma, we deduce that
$$
  \int_{U} \xi^2 f^\eta_{z^\alpha z^\beta}(\mathbf u^\eta) u^{\eta, \alpha}_{x_k} u^{\eta, \beta}_{x_k} \,dx \le \liminf_{h\to 0}   \int_{U} \xi^2 D_k^h(f^\eta_{z^\alpha} (\mathbf u^\eta)) D_k^h u^{\eta,\alpha}\,dx \le C.
$$
Since $\xi \equiv 1$ on $V$, it follows that
\begin{equation}
\label{bound.Dsquared.u.epsilon}
\int_V  f^\eta_{z^\alpha z^\beta}(\mathbf u^\eta) u^{\eta, \alpha}_{x_k} u^{\eta, \beta}_{x_k}  \,dx  \le C,
\end{equation}
the constant independent of $\eta$.

\medskip
3. Fix a small $\delta >0$ and let
$$
A_\delta : = \{ x \in U \mid \mathbf u \in  \Bbb K_\delta  \}.
$$
Then \eqref{bound.Dsquared.u.epsilon} implies for $0 < \eta < \delta$ that
$$
\int_{V \cap A_\delta } f^\eta_{z^\alpha z^\beta}(\mathbf u^\eta) u^{\eta, \alpha}_{x_k} u^{\eta, \beta}_{x_k}  \,dx  \le C.
$$
Recall that we may assume that
$$
\mathbf u^\eta \to  \mathbf u, D\mathbf u^\eta \to  D\mathbf u \ \ \text{a.e. in U}
$$
Hence
$$
f^\eta_{z^\alpha z^\beta}(\mathbf u^\eta) u^{\eta, \alpha}_{x_k} u^{\eta, \beta}_{x_k}  \to f_{z^\alpha z^\beta}(\mathbf u) u^{ \alpha}_{x_k} u^{ \beta}_{x_k}
 \ \ \text{a.e. in $V \cap A_\delta$}
$$
Since $f^\eta_{z^\alpha z^\beta}(\mathbf u^\eta) u^{\eta, \alpha}_{x_k} u^{\eta, \beta}_{x_k} \ge 0$, we may invoke Fatou's Lemma, to deduce that
$$
\int_{V \cap A_\delta }f_{z^\alpha z^\beta}(\mathbf u) u^{ \alpha}_{x_k} u^{ \beta}_{x_k}\, dx \le \liminf_{\eta \to 0}\int_{V \cap A_\delta } f^\eta_{z^\alpha z^\beta}(\mathbf u^\eta) u^{\eta, \alpha}_{x_k} u^{\eta, \beta}_{x_k}  \,dx  \le C.
$$
Next, let $\delta \to 0$:
$$
\int_{V }f_{z^\alpha z^\beta}(\mathbf u) u^{ \alpha}_{x_k} u^{ \beta}_{x_k}\, dx \le    C.
$$

 We combine this with our assumption \eqref{growth.log}, to discover

\begin{equation}
\label{Df.in.L2}
 \int_{V } |Df(\mathbf u) D \mathbf u|^2\,dx \le \frac{1}{\gamma} \int_{V}
 (D\mathbf u)^TD^2f(\mathbf u) D\mathbf u + C |D\mathbf u|^2 \,dx \le C.
\end{equation}

\medskip

4.  We claim next that $Df(\mathbf u) D \mathbf u$ is the weak gradient of $f(\mathbf u)$ in the sense of distributions.
To see this, take a large number $\eta$ and define the open, convex set
$$
\Bbb F_\eta := \{ z \in \Bbb R^k  \mid f(z) < \eta  \}.
$$
Let
$
\Phi_\eta
$
denote the projection of $\Bbb R^k$ onto the closure of ${\Bbb F_\eta}$. Then $\Phi_\eta$ is Lipschitz continuous, with Lipschitz  constant  equal to one. Next let
$$
\mathbf u_\eta := \Phi_\eta(\mathbf u).
$$
Then
$$
0 \le f(\mathbf u_\eta) \le f(\mathbf u)  \in L^1(V),
$$
and therefore the Dominated Convergence Theorem implies that
$$
f(\mathbf u_\eta) \to f(\mathbf u) \quad \text{in $L^1(V)$}.
$$

Since  $\Phi_\eta$ has Lipschitz  constant  equal to one and  since $f$ restricted to $\Bbb F_\eta$ is smooth, we see that
$$ f(\mathbf{u}_{\eta}) \in H^1(V).$$
Now if a function belongs to the Sobolev space $H^1$ its gradient vanishes almost everywhere
on each level set.
Consequently,
$$Df(\mathbf{u}_{\eta}) D\mathbf{u}_{\eta}  = D(f(\mathbf{u}_{\eta})) = 0 \ \  \text{a.e. on  $\{ f(\mathbf{u}_{\eta}) = \eta \} = \{\mathbf{u} \notin \Bbb F_\eta
  \} $};
$$
and hence
$$
Df(\mathbf{u}_{\eta}) D\mathbf{u}_{\eta} =
\begin{cases}  Df(\mathbf{u}) D\mathbf{u}
\ \ \text{a.e. on  $ \{\mathbf{u} \in \Bbb F_\eta
  \} $}   \\
0 \qquad \qquad \ \  \text{a.e. on  $ \{\mathbf{u} \notin \Bbb F_\eta
  \}. $}
\end{cases}
$$
It follows that
$$
|Df(\mathbf{u}_{\eta}) D\mathbf{u}_{\eta}|  \le |Df(\mathbf{u}) D\mathbf{u}| \in L^2(V),
$$
according to \eqref{Df.in.L2}. Since $Df(\mathbf{u}_{\eta})D\mathbf{u}_{\eta}\to Df(\mathbf{u})D\mathbf{u}$ almost everywhere, the Dominated Convergence Theorem now implies
that
 $$Df(\mathbf{u}_{\eta})D\mathbf{u}_{\eta}\to  Df(\mathbf{u})D\mathbf{u}\quad \text{in $L^2(V).$}$$

 Hence for any function $\phi \in C^\infty(V)$ with compact support and for any $k = 1, \dots , n$, we have
\begin{align*}
\int_V f(\mathbf u) \phi_{x_k} \, dx & = \lim_{\eta \to \infty}\int_V f(\mathbf u_\eta) \phi_{x_k} \, dx
\\ &=  -\lim_{\eta \to \infty}\int_V ( f(\mathbf u_\eta))_{x_k} \phi \, dx  \\
&= -\lim_{\eta \to \infty}\int_V f_{z^{\alpha}}(\mathbf{u}_{\eta})(u^{\alpha}_{\eta})_{x_k} \phi \, dx  \\
&=  -\int_V  f_{z^{\alpha}}(\mathbf{u})u^{\alpha}_{x_k}\phi \, dx.
\end{align*}
Therefore $ D(f(\mathbf u))$ exists in the sense of distributions and  equals
 $ Df(\mathbf u) D \mathbf u \in L^2(V)$.

Next, put $(f(\mathbf u))_V :=\dashint_V f(\mathbf u)\,dx < \infty$.
According to Poincar\'e's inequality,
$$
\int_V |f (\mathbf u) - (f(\mathbf u))_V |^2 \,dx \le C \int_V |D(f (\mathbf u))|^2\,dx \le C.
$$
Thus
$$
\int_V |f(\mathbf u)|^2\,dx \ \le C.
$$
Hence $f ( \mathbf u) \in H^1_{loc}(U);$ which in turn implies (see the proof of Theorem \ref{hausdorff1})
$$
\mathcal{H}^{n-2+\epsilon}\left(\{x\in U|\, f(\mathbf{u}(x))=\infty\}\right)=0,
$$
for each $\epsilon> 0.$
Now proceeding similarly as in the proof of Theorem \ref{hausdorff1} we get that
$\mathcal{H}^{n-2+\epsilon}(U- U_0)=0$ for each each $\epsilon> 0.$
\end{proof}
\end{section}

\section{Applications.}

{\bf 5.1 Variational models for liquid crystals.} Our partial regularity theorems are motivated by some new physical models for nematic liquid crystals.

\medskip

 The nematic phase of a liquid crystal is a phase for which  the molecules are free to flow but  still tend to align, so as to have  long range directional order locally. These long range directions are locally approximately parallel. Thus the molecule at a point  $x$ has a preferred direction $\mathbf{n}(x)$ belonging to the unit sphere $\mathbb S^2$,  but it can also move around a little bit in the other two directions.

 There are  two well-known  mathematical models
of nematic liquid crystals,  the {\em mean field approach} and
and {\em Landau-de Gennes theory}.

\medskip

{\bf 5.2 Mean field models.}
In the mean field approach, the  alignment
of the nematic molecules at each point $x$ in space is described by
a probability distribution function $\rho_x$ on the unit sphere
satisfying  $\rho_x(p)=\rho_x(-p)$,  to model  the head-to-tail symmetry.
The first moment of $\rho_x$ hence vanishes:
$$
\int_{\mathbb S^2} p \rho_x \,d \mathcal{H}^2  =0.
$$
A corresponding  macroscopic  order parameter
is
\begin{equation}\label{macro}
Q(x) =\int_{\mathbb S^2} ( p \otimes p-{\textstyle \frac{1}{3}}I)  \rho_x \,d \mathcal{H}^2 .
\end{equation}
Thus  $Q$
is a symmetric, traceless $3 \times 3$ matrix
whose eigenvalues $\lambda_i = \lambda_i(Q)$ are constrained
by the inequalities (see e.g. \cite{Maj1})
\begin{equation}
 \label{eigen}
-{\textstyle \frac{1}{3}} \le \lambda_i \le {\textstyle \frac{2}{3}} \ \  (i=1,2,3);\  \sum_{i=1}^3\lambda_i=0.
\end{equation}
Notice that \eqref{eigen} is  equivalent to
\begin{equation}
 \label{eigen1}
-{\textstyle \frac{1}{3}} | \xi |^2 \le(Q \xi , \xi ) \le {\textstyle \frac{2}{3}}| \xi |^2\,\mbox{ for all } \xi \in \mathbb R^3;\  \sum_{i=1}^3 Q_{ii}=0.
\end{equation}
Consequently the set $\Bbb K$ of symmetric, traceless matrices $Q$ satisfying   \eqref{eigen} is bounded, closed and convex.

\medskip

{\bf 5.3 Landau-de Gennes models.} The Landau-de Gennes theory also describes the state of a nematic liquid crystal by the macroscopic  order parameter $Q$. However, now $Q$  is only required to be symmetric, traceless $3\times 3$ matrix; and here is no requirement of {\it a priori} bounds on the eigenvalues of $Q$ like \eqref{eigen}. The corresponding Landau-de Gennes energy functional is
$$
I_{LG}[Q] = \int_U f_B(Q) + F(Q, DQ) \,dx,
$$
where $f_B$ is a thermotropic bulk potential. As noted  in Ball-Majumdar \cite{BM1}, the function $f_B$ usually has the form
$$
f_B(Q)=\frac{1}{2} a\ \mbox{tr}\, Q^2 +{\textstyle \frac{1}{3}} b\ \mbox{tr} \,Q^3+\frac{1}{4} c\ (\mbox{tr}\, Q^2)^2+ \cdots,
$$
the coefficients $a,b,c, \dots$ depending  upon the temperature $T$.
In particular, in this model there is no term that enforces the physical constraints \eqref{eigen} on the eigenvalues. The equilibrium and physically observable configurations correspond either to global or local minimizers of the Landau-de Gennes energy subject to the imposed boundary conditions. Majumdar  observed in \cite{Maj1} that there are cases that the equilibrium order parameters can take value outside ${\Bbb K}$  even for temperatures  $T$ quite close to the nematic-isotropic transition temperature.

\medskip

Ball and Majumdar in \cite{BM1} address this issue by defining a new bulk potential
$$
f_B(Q)=T \psi(Q)-\kappa |Q|^2
$$
Here
$$
\psi(Q) = \inf_{\rho \in A_Q}\int_{\mathbb S^2}  \rho \log \rho \, d\mathcal{H}^{2},
$$
where
$$
A_Q = \left\{\rho: {\mathbb S^2} \to \Bbb R \mid \rho \ge 0,  \int_{\mathbb S^2}  \rho \, d\mathcal{H}^{2} = 1,   Q =\int_{\mathbb S^2} ( p \otimes p-{\textstyle \frac{1}{3}}I) \rho \, d\mathcal{H}^{2}\right\}.
$$
The
$f$ is convex and blows up a $ \partial \Bbb K$ (that is,  whenever the eigenvalues approach the limiting values of either $-{\textstyle \frac{1}{3}}$ or ${\textstyle \frac{2}{3}}$ in \eqref{eigen}). Ball and Majumdar  also showed that $f$ exhibits a logarithmic divergence as the eigenvalues approach either $-{\textstyle \frac{1}{3}}$ or ${\textstyle \frac{2}{3}}$.

\medskip

{\bf Example.}
In the case of modified Landau-de Gennes model,
we set $n=3$, $k=5$, and
we define a linear mapping $Q \mapsto q$
from the set of traceless, symmetric matrices
$\mathbb S^{3}$ to $\mathbb R^5$ as follows:
$$
q=(Q_{ij})_{i \ge j,\, i+j<6}.
$$
It is clear that this linear mapping is an isomorphism.
Then 
$$
\mathbf u(x) = (Q_{ij}(x))_{i \ge j,\, i+j<6},
$$
and the bounded open convex set $\Bbb K \subset \mathbb R^5$ is
$$
\Bbb K=\{q \in \mathbb R^5 \mid ~-{\textstyle \frac{1}{3}} <\lambda_i(Q) <{\textstyle \frac{2}{3}},\,\mbox{for } i=1,2,3\}.
$$ \qed

\bigskip
\bibliographystyle{alpha}

\end{document}